\newtheorem{thm}{Theorem}
\newtheorem{cor}[thm]{Corollary}
\newtheorem{lem}[thm]{Lemma}
\newtheorem{conj}[thm]{Conjecture}
\newtheorem{exam}[thm]{Example}
\theoremstyle{definition}
\theoremstyle{remark}
\newtheorem{rem}[thm]{Remark}
\numberwithin{equation}{section}
\newlength{\vbraceheight}
\def\vbig#1{{\resizebox{!}{\vbraceheight}{$\left#1\vbox to\vbraceheight{}\right.\n@space$}}}
\def\vbigl{\mathopen\vbig}
\def\vbigr{\mathclose\vbig}
\newcommand{\spin}{\ifmmode{\rm Spin}\else{${\rm spin}$\ }\fi}
\newcommand{\spinc}{\ifmmode{{\rm Spin}^c}\else{${\rm spin}^c$}\fi}
\newcommand{\Z}{\mathbb{Z}}
\newcommand{\Q}{\mathbb{Q}}
\newcommand{\M}{\mathcal{M}}
\begin{document}

\title{The Montesinos trick for proper rational tangle replacement}%

\author{Duncan McCoy}%
\address {Universit\'{e} du Qu\'{e}bec \`{a} Montr\'{e}al}
\email{mc\_coy.duncan@uqam.ca}

\author{Raphael Zentner}%
\address {Universität Regensburg}
\email{Raphael.Zentner@mathematik.uni-regensburg.de}

\date{}%

\begin{abstract}
Recently Iltgen, Lewark and Marino introduced the concept of a proper rational tangle replacement and the corresponding notion of the proper rational unknotting number. In this note we derive a version of the Montesinos trick for proper rational tangle replacement and use it to study knots with proper rational unknotting number one. We prove that knots with proper rational unknotting number one are prime and classify the alternating knots with proper rational unknotting number one. We also study Montesinos knots with proper rational unknotting number one.
\end{abstract}
\maketitle


\section{Introduction}
Recently Iltgen, Lewark and Marino introduced the concept of the proper rational unknotting number of a knot and derived a lower bound for this quantity from Khovanov homology \cite{Iltgen2021Khovanov}. A link $L'$ is said to be obtained by a {\em rational tangle replacement (RTR)} on $L$ if $L'$ can be obtained by taking a rational tangle $T$ in $L$ and replacing it by another rational tangle $T'$. This tangle replacement is said to be {\em proper} if the arcs of $T$ and $T'$ both connect the same pairs of endpoints on the boundary. So, for example, a crossing change is an example of a proper RTR, whereas resolving a crossing is not a proper RTR. Given a knot $K$ the {\em proper rational unknotting number} $u_q(K)$ is defined to be the minimal number of proper RTRs in a sequence of proper RTRs converting $K$ to the unknot. Since a crossing change is an example of a proper RTR, we see that the classical unknotting number $u(K)$ is an upper bound for $u_q(K)$. Another example of a proper RTR is shown in Figure~\ref{fig:8_10}, where it is shown that $u_q(8_{10})=1$.

\subsection{The Montesinos trick}
One of the most successful tools in studying the classical unknotting number has been the Montesinos trick \cite{Montesinos1975surgery}, which relates crossing changes in a link $L$ to surgeries on its double branched cover $\Sigma(L)$.  In this paper, we derive the appropriate version of the Montesinos trick for proper RTRs and use it to prove some results about knots with proper rational unknotting number one.

\begin{restatable}{lem}{montytrick}\label{lem:monty}
Suppose that a link $L'$ can be obtained by a proper RTR on $L$. Then there is a 3-manifold $M$ with a single torus boundary component with slopes $\alpha, \beta$ on $\partial M$ such that $M(\alpha)\cong \Sigma(L)$, $M(\beta)\cong \Sigma(L')$ and $\Delta(\alpha, \beta)$ is even.
\end{restatable}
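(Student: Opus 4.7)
The plan is to apply the classical Montesinos trick verbatim and then extract the parity condition from the properness hypothesis. First, I recall the setup: the rational tangle $T$ sits inside a 3-ball $B$ meeting $L\cap \partial B$ in four points, and the double cover of $B$ branched over the two arcs of $T$ is a solid torus $V_T$; its boundary is the double cover of the 4-punctured sphere $\partial B\setminus (L\cap \partial B)$, which is a torus (the ``pillowcase cover''). Letting $M$ denote the double cover of $S^3\setminus \mathrm{int}(B)$ branched over $L\setminus \mathrm{int}(B)$, one obtains a decomposition $\Sigma(L)=M\cup_{T^2} V_T$, so $M$ has a single torus boundary component. Replacing $T$ by $T'$ does not change $M$, only the way a solid torus is attached to it, so $\Sigma(L')=M\cup_{T^2} V_{T'}$. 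Thus $\Sigma(L)$ and $\Sigma(L')$ are Dehn fillings of the same manifold $M$ along the slopes $\alpha,\beta$ determined by the meridian discs of $V_T$ and $V_{T'}$.

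Next I would identify these slopes with the Conway fractions of the tangles. Choosing a fixed basis $(\mu,\lambda)$ of $H_1(\partial M)$ coming from the double cover of the standard meridians of two of the four punctures of $\partial B$, a standard computation (essentially Montesinos's original one) shows that the rational tangle with Conway fraction $p/q$ in lowest terms lifts to a solid torus whose meridian represents $p\mu+q\lambda$. Consequently $\alpha$ corresponds to $p/q$ and $\beta$ to $p'/q'$, where $p/q$ and $p'/q'$ are the Conway fractions of $T$ and $T'$, and
\[
\Delta(\alpha,\beta)=|pq'-p'q|.
\]

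The new ingredient is the parity claim. The connectivity of a rational tangle $T_{p/q}$ between the four boundary points NW, NE, SW, SE is determined by the class of $(p\bmod 2,q\bmod 2)\in (\Z/2)^2\setminus\{(0,0)\}$: namely $(0,1)$ gives the horizontal connectivity NW--NE, SW--SE, $(1,0)$ gives the vertical connectivity NW--SW, NE--SE, and $(1,1)$ gives the diagonal connectivity NW--SE, NE--SW. The hypothesis that the RTR is proper means precisely that $T$ and $T'$ induce the same pairing of boundary points, hence $(p,q)$ and $(p',q')$ have the same parity type. A straightforward case check then gives $pq'-p'q\equiv 0\pmod 2$ in all three cases (in the mixed types one term is even in each product, and in the $(\mathrm{odd},\mathrm{odd})$ case both products are odd, so their difference is even), which is exactly the claim.

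I expect the main technical obstacle to be step 2, pinning down the precise correspondence between Conway fractions and slopes on $\partial M$ in a consistent coordinate system, since this requires being careful about the action of $SL_2(\Z)$ on both the tangle side (via the standard twist operations generating rational tangles) and the filling side (via change of basis on $H_1(\partial M)$). Steps 1 and 3 are, respectively, standard and a short arithmetic verification once step 2 is set up correctly.
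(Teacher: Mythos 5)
Your proposal is correct and follows essentially the same route as the paper: decompose $\Sigma(L)$ and $\Sigma(L')$ as Dehn fillings of the double branched cover $M$ of the tangle complement, identify the filling slopes with the Conway fractions so that $\Delta(\alpha,\beta)=|pq'-p'q|$ (the paper cites \cite[Theorem~4.3]{Gordon2009dehnsurgery} for the correspondence you flag as the technical point), and then observe that properness forces the two fractions to have the same parity type, making $|pq'-p'q|$ even. Your case check of the parity condition matches the paper's Lemma~\ref{lem:parity_cond}.
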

Here $\Delta(\alpha, \beta)$ denotes the distance between the slopes $\alpha$ and $\beta$, that is, the minimal possible number of intersection points when $\alpha$ and $\beta$ are represented by simple closed curves. The classical Montesinos trick asserts that when $L$ and $L'$ are related by a crossing change we can find such slopes with $\Delta(\alpha, \beta)=2$.

\begin{figure}[!ht]
  \begin{overpic}[width=300pt]{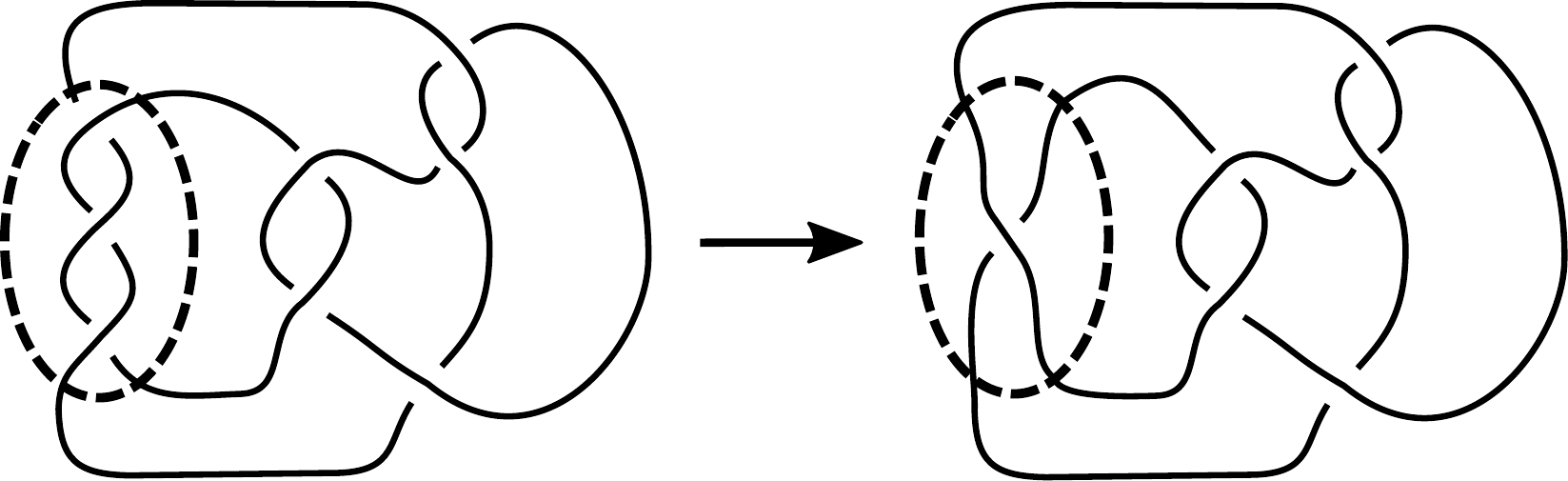}
    \put (15,-5) {\large $8_{10}$}
    \put (80,-5) {\large $U$}
  \end{overpic}
    \vspace{0.7cm}
  \caption{An example of a proper RTR that converts the knot $8_{10}$ to the unknot.}
  \label{fig:8_10}
\end{figure}

When specialized to RTRs on the unknot, Lemma~\ref{lem:monty} implies the following obstruction to a knot having $u_q(K)=1$.
\begin{cor}\label{cor:prun1}
Let $K$ be a non-trivial knot with $u_q(K)=1$, then there is a knot $L$ in $S^3$ and a rational number $p/q \in \Q$ with $q$ even, such that $\Sigma(K)\cong S_{p/q}^3(L)$.
\end{cor}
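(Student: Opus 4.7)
The plan is to apply Lemma~\ref{lem:monty} directly with $(L,L')=(K,U)$, where $U$ denotes the unknot. Since $u_q(K)=1$, a single proper RTR converts $K$ to $U$, so the lemma yields a $3$-manifold $M$ with one torus boundary component and slopes $\alpha,\beta$ on $\partial M$ such that $M(\alpha)\cong\Sigma(K)$, $M(\beta)\cong\Sigma(U)\cong S^3$, and $\Delta(\alpha,\beta)$ is even. The crucial observation is that the condition $M(\beta)\cong S^3$ realises $M$ as the exterior of a knot in $S^3$.

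Concretely, I would take $L\subset S^3$ to be the core of the solid torus glued to $M$ to perform the $\beta$-filling; then $M$ is homeomorphic to the exterior of $L$, and under this identification $\beta$ is the meridian $\mu$ of $L$. Choosing the Seifert longitude $\lambda$ gives the standard basis $\{\mu,\lambda\}$ of $H_1(\partial M;\Z)$, in which one writes $\alpha=p\mu+q\lambda$ with $\gcd(p,q)=1$. By the definition of Dehn surgery this immediately gives $\Sigma(K)\cong M(\alpha)\cong S^3_{p/q}(L)$.

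To establish the parity of $q$, I use that in the meridian--longitude basis the geometric intersection number satisfies $\Delta(\alpha,\beta)=|q|$, so the even-distance clause of Lemma~\ref{lem:monty} forces $q$ to be even. The final subtlety, and really the only step that needs any care, is to verify that $q\neq 0$ so that $p/q$ is a genuine element of $\Q$: if $q=0$ then $\alpha=\pm\mu$ and $\Sigma(K)\cong S^3$, which by the Smith conjecture (equivalently, the fact that only the unknot has $S^3$ as its branched double cover) would force $K$ to be the unknot, contradicting the hypothesis that $K$ is non-trivial.

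The main ``obstacle'' is therefore not really an obstacle at all: once Lemma~\ref{lem:monty} is available, the argument is a bookkeeping translation from the abstract slope language of the Montesinos trick into the standard meridian--longitude description of surgery on a knot in $S^3$, together with the short non-triviality check above that upgrades an infinite-slope filling to a genuine rational $p/q$.
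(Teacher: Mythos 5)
Your proposal is correct and follows exactly the route the paper intends: the paper deduces the corollary directly from Lemma~\ref{lem:monty} by filling along $\beta$ to realise $M$ as a knot exterior with $\beta$ the meridian, so that $\Delta(\alpha,\beta)=|q|$ being even gives the parity condition. Your extra check that $q\neq 0$ (via the Smith conjecture and the non-triviality of $K$) is a worthwhile detail that the paper leaves implicit.
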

The notation here is that $S_{p/q}^3(L)$ denotes $p/q$-surgery on the knot $L$. The condition that $\Delta(\alpha, \beta)$ is even in Lemma~\ref{lem:monty} manifests as the condition that $q$ is even, since in the usual conventions for labelling slopes on a knot complement in $S^3$, the quantity $q$ is the distance between the slope $p/q$ and the meridian of the knot.

As a first example, we can combine the work of Ghiggini with Corollary~\ref{cor:prun1} to calculate the proper rational unknotting number of the torus knot $T(3,5)$. 
\begin{exam}
For the torus knot $T(3,5)$, we have $u_q(T_{3,5})=2$.
\end{exam}
\begin{proof}
The double branched cover of $T_{3,5}$ is the Poincar\'e homology sphere. However Ghiggini has shown that the only way the Poincar\'e homology sphere can arise by surgery on a knot in $S^3$, is by $-1$-surgery on the left-handed trefoil \cite{Ghiggini2008fibredness}. Thus Corollary~\ref{cor:prun1} implies that $u_q(T_{3,5})>1$. The knot $T_{3,5}$ is isotopic to the pretzel knot $P(-2,3,5)$ which can be easily unknotted by two proper RTRs (see Lemma~\ref{lem:unknotting_Montesinos}).
\end{proof}

\subsection{Alternating knots}
Combining Corollary~\ref{cor:prun1} with known results about Dehn surgeries on $S^3$ rapidly yields several applications. Firstly we can classify the alternating knots with proper rational unknotting number one. 
\begin{restatable}{thm}{altthm}\label{thm:alt_char}
Let $K$ be a non-trivial alternating knot. Then the following are equivalent:
\begin{enumerate}[(i)]
\item\label{it:uq=1} $u_q(K)=1$;
\item\label{it:surgery} $\Sigma(K) \cong S_{p/q}^3(L)$ where $L\subseteq S^3$ is a knot and $q$ is even;
\item\label{it:tangle_replace} $K$ possesses an alternating diagram which can be obtained by a proper RTR on the dealternating crossing in an almost-alternating diagram of the unknot.
\end{enumerate}
\end{restatable}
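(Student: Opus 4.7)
The plan is to establish the equivalence cyclically: (i) $\Rightarrow$ (ii) $\Rightarrow$ (iii) $\Rightarrow$ (i). Two of these implications are straightforward. The implication (i) $\Rightarrow$ (ii) is immediate from Corollary~\ref{cor:prun1}. For (iii) $\Rightarrow$ (i), if $K$ is obtained from an almost-alternating diagram of the unknot by a proper RTR at the dealternator, then the inverse proper RTR converts $K$ back into the unknot, giving $u_q(K)\leq 1$; nontriviality of $K$ then forces $u_q(K)=1$.

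The substantive content is (ii) $\Rightarrow$ (iii). The starting observation is that $\Sigma(K)$, being the double branched cover of a nontrivial alternating knot, is a Heegaard Floer L-space by the theorem of Ozsv\'ath--Szab\'o, so the hypothesis $\Sigma(K)\cong S^3_{p/q}(L)$ with $q$ even presents $\Sigma(K)$ as a non-integer L-space surgery on a knot $L\subset S^3$. I would first apply rigidity results for L-space surgeries together with the structure theory of double branched covers of alternating knots to show that $L$ can be taken strongly invertible in a way compatible with the deck transformation of $\Sigma(K)\to S^3$. Once the deck involution restricts to an involution on the exterior of $L$, the converse direction of the Montesinos trick (essentially running the proof of Lemma~\ref{lem:monty} backwards) realises both fillings $S^3$ and $\Sigma(K)$ as double branched covers of tangle closures. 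This translates the slope pair $(\alpha,\beta)$ into a pair of rational tangle fillings of a fixed tangle, producing a proper RTR from the unknot to $K$; the evenness of $q$, i.e.\ the condition $\Delta(\alpha,\beta)$ even, is exactly what ensures the RTR is proper.

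The final step refines this RTR into the geometric form demanded by (iii). Starting from a reduced alternating diagram $D$ of $K$, the plan is to use the flexibility of the tangle decomposition to isotope the replaced tangle so that it appears in $D$ as a single crossing, with the rest of $D$ contributing an alternating template outside a small disk containing this crossing. Performing the inverse RTR then yields a diagram of the unknot that agrees with $D$ away from this disk and has at most one non-alternating crossing in its place, which is exactly an almost-alternating unknot diagram with the prescribed crossing as dealternator. The main obstacle throughout is the compatibility step in (ii) $\Rightarrow$ (iii): producing a strong inversion on $L$ whose extension to the surgered manifold matches the deck transformation of $\Sigma(K)$. This is where the alternating hypothesis is essential, and is precisely the phenomenon that prevents the converse of Corollary~\ref{cor:prun1} from being available for arbitrary knots.
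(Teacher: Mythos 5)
Your organization of the easy implications is fine and matches the paper: (i)$\Rightarrow$(ii) is Corollary~\ref{cor:prun1} and (iii)$\Rightarrow$(i) is immediate from the definition. The problem is the heart of the matter, (ii)$\Rightarrow$(iii), where your plan has a genuine gap. You propose to ``apply rigidity results for L-space surgeries together with the structure theory of double branched covers of alternating knots to show that $L$ can be taken strongly invertible in a way compatible with the deck transformation of $\Sigma(K)\to S^3$,'' and then run the Montesinos trick backwards. No such rigidity results exist in the form you need. Producing a strong inversion on $L$ whose extension to $S^3_{p/q}(L)$ agrees (up to conjugation) with the covering involution of $\Sigma(K)\to S^3$ is precisely the open problem that prevents Corollary~\ref{cor:prun1} from being reversible in general; you correctly identify it as ``the main obstacle'' but then do not overcome it -- you simply assert it can be done because $K$ is alternating. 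The L-space theorem of Ozsv\'ath--Szab\'o gives no information about involutions, and the subsequent step, isotoping the replaced tangle to sit as a single crossing in a reduced alternating diagram so that the inverse RTR produces an almost-alternating unknot diagram, is likewise unjustified; nothing in your argument controls how the quotient tangle decomposition interacts with an alternating diagram of $K$.

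The paper takes an entirely different and non-equivariant route. After using the L-space condition and the bound $|p/q|\geq 2g(L)-1$ to reduce to the case $|p/q|>1$ (handling the unknot-surgery case by reparametrizing the slope, a reduction your sketch omits), it normalizes to $p/q<-1$, writes $p/q=-n+r/q$ with $1\leq r<q$, and then invokes Theorem~1.3 of \cite{McCoy2015noninteger}: for alternating knots, the existence of such a non-integer surgery description of $\Sigma(K)$ already implies that $K$ has an alternating diagram obtained by replacing the dealternating crossing $R(-1/1)$ of an almost-alternating unknot diagram with $R((q-r)/r)$. That cited theorem is proved by obstruction-theoretic methods (Donaldson-type lattice embedding arguments applied to the Goeritz form), not by constructing an equivariant surgery. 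The only new content in the present proof is the parity bookkeeping: $q$ even forces $r$ and $q-r$ odd, so the tangle replacement is proper by Lemma~\ref{lem:parity_cond}. If you want a correct proof, you should either cite that result as the paper does or supply a proof of it; the equivariant-topology path you sketch is not currently viable.
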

We remind the reader that an almost-alternating diagram is one that can made alternating by changing a single crossing, known as the dealternating crossing. Here the implications \eqref{it:uq=1}$\Rightarrow$\eqref{it:surgery} and \eqref{it:tangle_replace}$\Rightarrow$\eqref{it:uq=1} are straight forward with the former being Corollary~\ref{cor:prun1} and the latter being the definition of proper rational unknotting number. The remaining implication, \eqref{it:surgery}$\Rightarrow$\eqref{it:tangle_replace} will be deduced from some previous work of the first author \cite{McCoy2015noninteger}.

\subsection{Prime knots}
We see also that knots with $u_q(K)=1$ are prime. This generalizes the result of Scharlemann \cite{Scharlemann1985unknotting} that knots with unknotting number one are prime. The argument used is essentially the one used by Zhang in his reproof of Scharlemann's result \cite{Zhang1991Unknotting}.
\begin{thm}\label{thm:prime}
Let $K$ be an knot with $u_q(K)=1$. Then $K$ is prime.
\end{thm}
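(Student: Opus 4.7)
The plan is to adapt Zhang's reproof of Scharlemann's theorem~\cite{Zhang1991Unknotting} by substituting Corollary~\ref{cor:prun1} for the classical Montesinos trick. Suppose for a contradiction that $K$ is not prime, so $K = K_1 \# K_2$ with each $K_i$ non-trivial. Then the double branched cover decomposes as $\Sigma(K) \cong \Sigma(K_1)\# \Sigma(K_2)$, and by the resolution of the Smith conjecture neither $\Sigma(K_i)$ is homeomorphic to $S^3$. Thus $\Sigma(K)$ is a non-trivial connected sum of $3$-manifolds, and in particular is reducible.

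On the other hand, since $K$ is non-trivial with $u_q(K) = 1$, Corollary~\ref{cor:prun1} produces a knot $L \subset S^3$ and a rational slope $p/q \in \Q$ with $q$ even such that $\Sigma(K) \cong S^3_{p/q}(L)$. Since $\Sigma(K) \not\cong S^3$ forces $q \neq 0$, the evenness condition gives $|q| \geq 2$, so $p/q$ is a non-integer slope. This directly contradicts the theorem of Gordon and Luecke that any Dehn surgery on a knot in $S^3$ producing a reducible $3$-manifold must be integral, completing the proof.

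The proof is essentially a one-step contradiction once both ingredients are available, so the real content lies in having these two results on hand: the Montesinos trick of Corollary~\ref{cor:prun1} on the one side, and the Gordon--Luecke theorem on reducible Dehn surgeries on the other. The step most worth double-checking is the parity-to-non-integrality conversion, namely that $q$ even in the slope convention used in Corollary~\ref{cor:prun1} genuinely implies a non-integer surgery; this is immediate from the discussion following Lemma~\ref{lem:monty}, where it is noted that $q$ measures the distance from the meridian, so that integer slopes correspond to $|q|=1$. Consequently, no step of the argument presents a serious obstacle beyond invoking the two cited results correctly.
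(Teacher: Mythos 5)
Your proof is correct and takes essentially the same approach as the paper: both combine Corollary~\ref{cor:prun1} with the Gordon--Luecke theorem that reducible Dehn surgeries must be integral. The only cosmetic difference is that you argue by contradiction via the decomposition $\Sigma(K_1\# K_2)\cong\Sigma(K_1)\#\Sigma(K_2)$ and the Smith conjecture, whereas the paper argues directly that the non-integral surgery is prime and then cites the fact that primeness of $\Sigma(K)$ forces primeness of $K$.
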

\begin{proof}
If $K$ is a knot with $u_q(K)=1$, then there is a knot $L$ such that $S_{p/q}^3(L)\cong \Sigma(K)$ for some rational $p/q\in \Q$ with $q$ even. In particular, we have $|q|>1$. Thus the work of Gordon and Luecke implies that $S_{p/q}^3(L)\cong \Sigma(K)$ is prime \cite{Gordon1987reducible}. However, since $\Sigma(K)$ is prime, the knot $K$ must itself be prime \cite{Kim1980Splitting}. 
\end{proof}

Note that Theorem~\ref{thm:prime} relies on the fact that we are considering proper RTRs. For example, one can show that the composite knot $3_1\# 4_1$ can be unknotted by a single rational tangle replacement (see Figure~\ref{fig:non_prime}).

\subsection{Montesinos knots}
Finally we study the proper rational unknotting number for Montesinos knots.

\begin{thm}\label{thm:Montesinos}
Let $K$ be a Montesinos knot with containing at least four rational tangles, that is $K=\M(e; \frac{p_1}{q_1}, \dots, \frac{p_k}{q_k})$, where $|p_i|\geq 2$ for all $i$ and $k\geq 4$. Then $u_q(K)>1$.
\end{thm}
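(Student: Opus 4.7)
The approach is to apply Corollary~\ref{cor:prun1} and derive a contradiction from the Seifert fibered structure of the double branched cover. Suppose, for contradiction, that $u_q(K)=1$. By Corollary~\ref{cor:prun1} there exist a knot $L\subset S^3$ and a rational $p/q\in\Q$ with $q$ even (so $|q|\geq 2$) such that $\Sigma(K)\cong S^3_{p/q}(L)$.

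On the other hand, the double branched cover of $K=\M(e;p_1/q_1,\ldots,p_k/q_k)$ is Seifert fibered over $S^2$ with base orbifold $S^2(|p_1|,\ldots,|p_k|)$. The assumptions $|p_i|\geq 2$ for all $i$ and $k\geq 4$ mean this Seifert fibration has at least four exceptional fibers. Moreover $\Sigma(K)$ is toroidal: any essential simple closed curve in the base orbifold separating the cone points into two subsets of size $\geq 2$ lifts to an essential torus in $\Sigma(K)$.

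The heart of the proof is then to show that a Seifert fibered space over $S^2$ with at least four exceptional fibers cannot arise as $p/q$-surgery on a knot in $S^3$ with $|q|\geq 2$. I would argue this by a case analysis on the geometric type of $L$. If $L$ is a torus knot, Moser's classification of surgeries on torus knots gives at most three exceptional fibers in $S^3_{p/q}(L)$, contradicting $k\geq 4$. If $L$ is a cable or more general satellite knot, the classification of Seifert fibered satellite surgeries (following Miyazaki--Motegi and subsequent work) permits at least four exceptional fibers only when $q=\pm 1$. If $L$ is hyperbolic, Gordon--Luecke's bound $\Delta(p/q,\mu)\leq 2$ on toroidal filling slopes restricts us to $|q|=2$, and the Eudave-Mu\~noz classification of half-integer toroidal surgeries on hyperbolic knots then enumerates the resulting toroidal manifolds; one verifies that each contains a hyperbolic JSJ piece and in particular is not globally Seifert fibered.

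The main obstacle is the hyperbolic case: one must pass through the Eudave-Mu\~noz list and confirm that none of the half-integer toroidal surgeries on hyperbolic knots produces a Seifert fibered manifold, let alone one with four or more exceptional fibers over $S^2$. Assembling the three cases contradicts $|q|\geq 2$ and thereby proves $u_q(K)>1$.
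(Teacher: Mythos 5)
Your opening matches the paper's first step: invoke Corollary~\ref{cor:prun1} to get $\Sigma(K)\cong S^3_{p/q}(L)$ with $|q|\geq 2$, and observe that $\Sigma(K)$ is a Seifert fibered space with at least four exceptional fibers, hence toroidal. But from there the paper finishes in one line: such a manifold is Haken, and Boyer and Zhang \cite[Corollary~J]{Boyer1994exceptional} proved that a Haken Seifert fibered space can only be obtained by \emph{integral} surgery on a knot in $S^3$. That single citation disposes of torus knots, satellites and hyperbolic knots simultaneously, with no case analysis.

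Your trichotomy on the geometric type of $L$ is a genuinely different route, but as written it has a gap in the hyperbolic case (and the satellite case is only asserted). You say that one ``verifies that each [Eudave-Mu\~noz half-integer toroidal surgery] contains a hyperbolic JSJ piece''; this is not the right description and not a verification. Those surgeries produce graph manifolds --- unions along the essential torus of two Seifert fibered pieces over the disk, each with two exceptional fibers --- so there is no hyperbolic JSJ piece to point to. What you actually need is that the two fibrations never match up along the torus, i.e.\ that none of these manifolds is globally Seifert fibered over $S^2$ with four exceptional fibers; that is a nontrivial check you have not carried out, and the cleanest way to do it is precisely the Boyer--Zhang theorem, at which point the whole case analysis becomes redundant. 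Likewise, the satellite step attributes to Miyazaki--Motegi a statement about ``at least four exceptional fibers only when $q=\pm1$'' that is not what their theorem says; their result reduces a non-integral surgery on a cable to a non-integral surgery on the companion, so you would still owe an argument for the companion. I recommend replacing the trichotomy with the Haken-ness observation and the citation of \cite{Boyer1994exceptional}.
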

\begin{proof}
If $K$ is a Montesinos knot with at least four rational tangles, then the double branched cover is a Seifert fibered space with at least four exceptional fibers. Such a Seifert fibered space contains an incompressible torus and is, therefore, Haken. Boyer and Zhang have shown that a Haken Seifert fibered space can arise only by integer surgery on $S^3$ \cite[Corollary~J]{Boyer1994exceptional}. Thus such a Montesinos knot cannot have $u_q(K)=1$.
\end{proof}
Again, Theorem~\ref{thm:Montesinos} is specific to the proper rational unknotting number; there are Montesinos knots with four rational tangles that can be unknotted by a non-proper RTR, as shown in Figure~\ref{fig:monty}. 

Montesinos knots with three rational tangles are harder to deal with on account of our weaker understanding of atoroidal Seifert fibered surgeries. 

\begin{restatable}{thm}{montythreestrands}\label{thm:Montesinos_3strands}
Let $K=\M(e; \frac{p_1}{q_1}, \frac{p_2}{q_2}, \frac{p_3}{q_3})$ be a Montesinos knot with $|p_1|,|p_2|,|p_3|>1$ and $u(K)\geq 5$. Then the following are equivalent: 
\begin{enumerate}[(i)]
\item\label{it:monty_uq=1} $u_q(K)=1$;
\item\label{it:monty_surgery} $\Sigma(K)$ arises by $p/q$-surgery on a torus knot for some $p/q\in \Q$ with $q$ even;
\item\label{it:monty_list} $K$ can be written in the form $K=\M(0;\frac{a}{b}, \frac{c}{d}, \frac{r}{s})$, where $\frac{b}{a}+\frac{d}{c}=\pm \frac{1}{ac}$ and $s$ is even.
\end{enumerate}
\end{restatable}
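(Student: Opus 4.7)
The plan is to prove the cycle $(i) \Rightarrow (ii) \Rightarrow (iii) \Rightarrow (i)$. The unifying observation is that when $K = \M(e; \frac{p_1}{q_1}, \frac{p_2}{q_2}, \frac{p_3}{q_3})$ with $|p_i| > 1$, its double branched cover $\Sigma(K)$ is an atoroidal small Seifert fibered space with exactly three exceptional fibers, so any surgery description supplied by Corollary~\ref{cor:prun1} must correspond to a small Seifert fibered Dehn filling.

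For $(i) \Rightarrow (ii)$, I would apply Corollary~\ref{cor:prun1} to obtain a knot $L \subset S^3$ and a slope $p/q$ with $q$ even such that $\Sigma(K) \cong S_{p/q}^3(L)$. By the available classification of non-integer Dehn surgeries producing small Seifert fibered spaces --- built out of Moser's theorem on torus knots, the analysis of cable-knot surgeries, and the exceptional Seifert-surgery results of Boyer--Zhang, Matignon, and subsequent authors --- the knot $L$ must be either a torus knot, a cable of a torus knot, or one of a finite list of sporadic hyperbolic examples. The hypothesis $u(K) \geq 5$ is used to discard every non-torus-knot possibility: in each such case the resulting Montesinos knot $K$ can be read off the classification and has classical unknotting number at most $4$, contradicting the assumption.

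For $(ii) \Rightarrow (iii)$, I would compare Moser's Seifert invariants for $S_{p/q}^3(T_{a,c})$ --- a Seifert fibered space over $S^2$ with exceptional multiplicities $a$, $c$ and $|p - qac|$ --- with the Seifert invariants of the double branched cover of $\M(e; \frac{p_1}{q_1}, \frac{p_2}{q_2}, \frac{p_3}{q_3})$. Because the Seifert fibration of a small Seifert fibered space is unique up to isotopy, matching invariants rewrites $K$ (after absorbing the integer $e$ into the fractions) in the form $\M(0; \frac{a}{b}, \frac{c}{d}, \frac{r}{s})$, produces the identity $\frac{b}{a} + \frac{d}{c} = \pm \frac{1}{ac}$ by equating the two expressions for the rational Euler number, and forces $s$ to be even because $q$ was. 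The implication $(iii) \Rightarrow (i)$ reverses this: given $K$ in the stated form, replacing the third tangle $\frac{r}{s}$ by the $\infty$ tangle (which corresponds to meridional surgery on $T_{a,c}$ in the double branched cover, yielding $S^3$) produces a Montesinos presentation of the unknot. This is a proper RTR because $s$ and $0$ are both even, so the arcs of $\frac{r}{s}$ and $\infty$ connect the same pairs of boundary endpoints.

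The principal obstacle is $(i) \Rightarrow (ii)$, which rests on sharp classification results for non-integer small Seifert fibered surgeries and on identifying the (finitely many) Montesinos knots that arise in the non-torus-knot parts of the classification; the bound $u(K) \geq 5$ is calibrated precisely to rule out those exceptions. The equivalence $(ii) \Leftrightarrow (iii)$, by contrast, reduces to a direct computation with Seifert invariants.
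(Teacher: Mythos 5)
Your implications \eqref{it:monty_surgery}$\Rightarrow$\eqref{it:monty_list} and \eqref{it:monty_list}$\Rightarrow$\eqref{it:monty_uq=1} match the paper: Moser's computation of Seifert invariants for torus knot surgeries (the paper cites \cite[Lemma~4.4]{Owens2012negdef}) gives the form in \eqref{it:monty_list}, and replacing the $r/s$ tangle by $R(1/0)$ is a proper RTR by Lemma~\ref{lem:parity_cond} and yields the unknot. Those parts are fine.

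The step \eqref{it:monty_uq=1}$\Rightarrow$\eqref{it:monty_surgery} has a genuine gap. You invoke ``the available classification of non-integer Dehn surgeries producing small Seifert fibered spaces,'' concluding that $L$ is a torus knot, a cable of a torus knot, or one of finitely many sporadic hyperbolic examples. No such classification exists: determining which hyperbolic knots admit non-integer atoroidal Seifert fibered surgeries is precisely the open problem the paper flags (Gordon's Conjecture~4.8, stated as Conjecture~\ref{conj:hyperbolic_seifert_fibered_surgeries}), and the paper explicitly explains that the three-tangle case is delicate for exactly this reason. Consequently your proposed use of $u(K)\geq 5$ --- reading off the Montesinos knots arising from the hypothetical sporadic examples and checking their unknotting numbers are at most $4$ --- cannot be carried out.

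The paper's actual mechanism is quantitative and quite different. A single proper RTR unknotting $K$ replaces some $R(r/q)$ by $R(1/0)$ with $q$ even, and Lemma~\ref{lem:unknotting} shows this can be simulated by at most $q/2$ crossing changes, so $u(K)\leq q/2$; the hypothesis $u(K)\geq 5$ therefore forces $q\geq 10$ in the surgery $\Sigma(K)\cong S^3_{p/q}(L)$ produced by Lemma~\ref{lem:fullmonty}. Since a Seifert fibered filling of a hyperbolic knot complement would be an exceptional surgery, Lackenby and Meyerhoff's universal bound \cite{Lackenby2013exceptional} rules out $L$ hyperbolic once $q\geq 10$. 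For $L$ a satellite, Miyazaki--Motegi \cite{Miyazaki1997Seifert} show $L$ is a cable $C_{\alpha,\beta}\circ L'$, atoroidality forces the slope to be $\alpha\beta\pm 1/q$, and the surgery descends to $p/(q\alpha^2)$-surgery on $L'$ with $q\alpha^2\geq 10$, so the same exceptional-surgery bound forces $L'$ to be a torus knot. To repair your argument you need this link between $u(K)\geq 5$ and the denominator $q$, which your proposal never makes.
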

This result is sufficient to compute the proper rational unknotting number for many Montesinos knots with three rational tangles, since every Montesinos knot with at most three rational tangle has $u_q(K)\leq 2$ (see Lemma~\ref{lem:unknotting_Montesinos} below).

\begin{rem}
The arithmetic conditions in \eqref{it:monty_list} are such that replacing the rational tangle corresponding to the parameter $r/s$ with a trivial tangle results in a copy of the unknot. Furthermore conditions \eqref{it:monty_surgery} and \eqref{it:monty_list} are exactly related by the Montesinos trick. If one takes the branched cover of the tangle $\M(0;\frac{a}{b}, \frac{c}{d}, \star)$, where $\frac{b}{a}+\frac{d}{c}=\pm \frac{1}{ac}$ and $\star$ indicates that we deleted the interior of a ball in place of a rational tangle, then one obtains the complement of a torus knot in $S^3$ (see, for example, the argument proving \cite[Proposition~1.5]{Donald2019diagrams}). Conversely, if one quotients the complement of a torus knot by a strong involution, then one obtains exactly a tangle of this form (see, for example, \cite[Theorem 6.1 and Figure 6]{Zentner_SU2_simple}).
\end{rem}


The implications \eqref{it:monty_surgery}$\Rightarrow$\eqref{it:monty_list}$\Rightarrow$\eqref{it:monty_uq=1} are all easily established without using the hypothesis that $u(K)\geq 5$. The condition on the unknotting number exists purely to aid in the proof of \eqref{it:monty_uq=1}$\Rightarrow$ \eqref{it:monty_surgery}. In particular, the unknotting number condition guarantees that the surgery coming from Corollary~\ref{cor:prun1} satisfies $q\geq 10$, allowing us to invoke Lackenby and Meyerhoff's work on exceptional surgeries \cite{Lackenby2013exceptional}. However, conjecturally Seifert fibered manifolds cannot arise by non-integer surgeries on hyperbolic knots in $S^3$.
\begin{conj}[{\cite[Conjecture~4.8]{Gordon1998dehn}}]\label{conj:hyperbolic_seifert_fibered_surgeries}
Suppose $K \subseteq S^3$ is a hyperbolic knot and that $S^3_{p/q}(K)$ is a Seifert fibered space. Then $q = 1$. 	
\end{conj}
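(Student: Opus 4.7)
The plan is to reduce the conjecture to the case of atoroidal small Seifert fibered surgeries by peeling off the easier cases using the Cyclic Surgery Theorem and the classification of toroidal surgeries. First, a Seifert fibered space over $S^2$ with at most two exceptional fibers is either a lens space or $S^2\times S^1$, the latter being ruled out as a non-trivial surgery by Gabai's resolution of Property~R. The Cyclic Surgery Theorem of Culler--Gordon--Luecke--Shalen then applies: a lens space surgery on a knot that is not a torus knot must satisfy $\Delta(p/q,\mu)=|q|\leq 1$, and hence $q=\pm 1$. Since $K$ is hyperbolic and therefore not a torus knot, this disposes of the case of at most two exceptional fibers.

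Second, I would consider the case where $S^3_{p/q}(K)$ is a Seifert fibered space containing an incompressible torus. The toroidal surgery theorem of Gordon--Luecke forces $|q|\leq 2$, so only the half-integral slopes $q=\pm 2$ remain. Using the JSJ decomposition of the surgered manifold together with the classification of half-integral toroidal surgeries on hyperbolic knots (notably the family of examples of Eudave-Mu{\~n}oz and subsequent characterizations), one should be able to show that none of these half-integral toroidal surgeries produces a genuine Seifert fibered space, reducing again to $q=\pm 1$. This step is more delicate but is arguably tractable from the existing literature on toroidal surgeries.

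The remaining, and by far the hardest, case is when $S^3_{p/q}(K)$ is an atoroidal small Seifert fibered space with base orbifold $S^2(a,b,c)$. This is the main obstacle, and it is precisely here that no general technique is known to force $q=\pm 1$. The available tools---Culler--Shalen norm estimates beyond the cyclic setting, Heegaard Floer $d$-invariants together with the $L$-space surgery condition, finite-surgery constraints from monopole Floer homology, and the Lackenby--Meyerhoff bound $|q|\leq 8$ for exceptional surgeries---each produce only partial obstructions, and none of them is sharp enough on its own to conclude that $q=\pm 1$ in the atoroidal small Seifert fibered setting. A genuine proof would require either substantially sharpening one of these approaches (for instance, extending the Culler--Shalen norm technology beyond the cyclic/abelian case, or obtaining a complete characterization of Seifert fibered $L$-space surgeries on hyperbolic knots) or introducing an entirely new idea. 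This is why the statement has remained a conjecture since Gordon's 1998 survey, and I would not expect to settle it by the methods developed in the present paper.
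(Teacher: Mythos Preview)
The statement you were asked to prove is labeled as a \emph{conjecture} in the paper (Conjecture~\ref{conj:hyperbolic_seifert_fibered_surgeries}), not a theorem; the paper offers no proof and does not claim one. It is simply quoted from Gordon's 1998 survey and used hypothetically to motivate Conjecture~\ref{conj:3_tangles}. So there is no ``paper's own proof'' to compare against, and you are right to conclude at the end of your proposal that the statement is genuinely open and not expected to yield to the methods of this paper.

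Your survey of the partial results is largely accurate, with one refinement worth noting. The toroidal Seifert fibered case is not merely ``arguably tractable'': it is completely settled by Boyer--Zhang, who showed that a Haken Seifert fibered space arises only by integer surgery on a knot in $S^3$ (this is precisely the result the paper invokes in the proof of Theorem~\ref{thm:Montesinos}). So you do not need the Eudave-Mu\~noz classification for that step. Your identification of the atoroidal small Seifert fibered case as the genuine obstacle, and your list of the tools that fall short (Culler--Shalen norms, Heegaard Floer constraints, the Lackenby--Meyerhoff bound), correctly reflects the current state of the problem.
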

If one runs through the proof of Theorem~\ref{thm:Montesinos_3strands} using Conjecture~\ref{conj:hyperbolic_seifert_fibered_surgeries} in the place of results from \cite{Lackenby2013exceptional}, then one sees that Conjecture~\ref{conj:hyperbolic_seifert_fibered_surgeries} implies the following conjecture, which characterizes the Montesinos knots with rational unknotting number one.
\begin{conj}\label{conj:3_tangles}
Conditions \eqref{it:monty_uq=1}, \eqref{it:monty_surgery} and \eqref{it:monty_list} from Theorem~\ref{thm:Montesinos_3strands} are equivalent for all Montesinos knots. 
\end{conj}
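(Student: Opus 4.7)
The plan is to adapt the proof strategy of Theorem~\ref{thm:Montesinos_3strands}, replacing the exceptional-surgery input from \cite{Lackenby2013exceptional}---which requires $q\geq 10$ and is the sole reason for the $u(K)\geq 5$ hypothesis---by the full strength of Conjecture~\ref{conj:hyperbolic_seifert_fibered_surgeries}. As noted immediately after the statement of Theorem~\ref{thm:Montesinos_3strands}, the implications \eqref{it:monty_surgery} $\Rightarrow$ \eqref{it:monty_list} $\Rightarrow$ \eqref{it:monty_uq=1} already go through without any unknotting number hypothesis, so the entire task reduces to establishing \eqref{it:monty_uq=1} $\Rightarrow$ \eqref{it:monty_surgery} for an arbitrary Montesinos knot $K=\M(e;p_1/q_1,p_2/q_2,p_3/q_3)$ with $|p_i|>1$ and $u_q(K)=1$.

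First I would apply Corollary~\ref{cor:prun1} to produce a knot $L\subseteq S^3$ and a slope $p/q\in\Q$ with $q$ even (so $|q|\geq 2$, i.e. the surgery is genuinely non-integral) such that $\Sigma(K)\cong S^3_{p/q}(L)$. The hypothesis $|p_i|>1$ for $i=1,2,3$ forces $\Sigma(K)$ to be a small, atoroidal, irreducible Seifert fibered space with base orbifold $S^2(|p_1|,|p_2|,|p_3|)$. The goal is then to pin down the geometric type of $L$ and show it must be a torus knot.

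Next I would split into the three cases of the Burde--Zieschang trichotomy for $L$. If $L$ is hyperbolic, Conjecture~\ref{conj:hyperbolic_seifert_fibered_surgeries} applied to the Seifert fibered surgery $S^3_{p/q}(L)\cong \Sigma(K)$ would force $q=\pm 1$, contradicting $|q|\geq 2$. If $L$ is a non-trivial satellite knot, one invokes the classification of Seifert fibered surgeries on satellites (going back to Scharlemann and Miyazaki--Motegi, together with the cabling-conjecture analysis in this setting): any Seifert fibered surgery on a satellite knot arises from a cable pattern, is integral, and the resulting manifold contains an essential torus; either conclusion contradicts the atoroidal, non-integral surgery we have in hand. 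The only remaining possibility is that $L$ is a torus knot, which is exactly condition~\eqref{it:monty_surgery}.

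The main---indeed the only---obstacle is the hyperbolic case, and this is of course precisely the reason the statement is a conjecture rather than a theorem. There is currently no unconditional way to exclude non-integer Seifert fibered surgeries on hyperbolic knots in $S^3$ for arbitrary even $q\geq 2$; the Lackenby--Meyerhoff machinery handles only sufficiently large $q$, which is why Theorem~\ref{thm:Montesinos_3strands} needs the bound $u(K)\geq 5$ to guarantee $q\geq 10$. Once Conjecture~\ref{conj:hyperbolic_seifert_fibered_surgeries} is available, the satellite step is essentially standard and the torus-knot step is the desired conclusion, so the strategy succeeds verbatim.
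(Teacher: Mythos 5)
This statement is a conjecture: the paper offers no unconditional proof, only the observation that Conjecture~\ref{conj:hyperbolic_seifert_fibered_surgeries} implies it (together with Theorem~\ref{thm:Montesinos} for $k\geq 4$ tangles and the reduction to \eqref{it:monty_uq=1}$\Rightarrow$\eqref{it:monty_surgery}). Your proposal is likewise conditional on Conjecture~\ref{conj:hyperbolic_seifert_fibered_surgeries} and follows the same skeleton as the proof of Theorem~\ref{thm:Montesinos_3strands}: apply Corollary~\ref{cor:prun1}, then run the torus/hyperbolic/satellite trichotomy. That much is aligned with the paper. (Two minor points: the conjecture is stated for \emph{all} Montesinos knots, so the case of four or more tangles should be dispatched explicitly via Theorem~\ref{thm:Montesinos}, where all three conditions fail because $\Sigma(K)$ is toroidal; and two-tangle, i.e.\ two-bridge, knots need a word as well.)

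The genuine gap is your treatment of the satellite case. You claim that a Seifert fibered surgery on a satellite knot "is integral" and "contains an essential torus," so that this case is contradictory. Both claims are false: cables of torus knots admit non-integral, atoroidal Seifert fibered surgeries. Concretely, for $L=C_{\alpha,\beta}\circ L'$ and $p/q=\alpha\beta\pm 1/q$ one has $S^3_{p/q}(L)\cong S^3_{p/(q\alpha^2)}(L')$, which for $L'$ a torus knot is generically an atoroidal Seifert fibered space with three exceptional fibers, obtained with $q\geq 2$. What Miyazaki--Motegi actually give (and what the paper uses) is that $L$ is cabled exactly once, say $L=C_{\alpha,\beta}\circ L'$; atoroidality of $\Sigma(K)$ then forces the slope to be $\alpha\beta\pm 1/q$, the cabling formula transfers the surgery to $L'$ with coefficient $p/(q\alpha^2)$, Conjecture~\ref{conj:hyperbolic_seifert_fibered_surgeries} (applied with $q\alpha^2>1$) rules out $L'$ hyperbolic, and one concludes that $\Sigma(K)$ is an even-denominator surgery on the torus knot $L'$. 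So the satellite case is not eliminated by contradiction; it is absorbed into condition \eqref{it:monty_surgery} with a \emph{different} torus knot and slope. As written, your argument would prove the stronger, and false, statement that $L$ itself must be a torus knot, and without the cabling reduction the implication \eqref{it:monty_uq=1}$\Rightarrow$\eqref{it:monty_surgery} is not established in the satellite case.
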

Note that Theorem~\ref{thm:Montesinos} implies the conjecture for Montesinos knots with at least four rational parameters. In fact, given Theorem~\ref{thm:alt_char} and Theorem~\ref{thm:Montesinos_3strands}, the only cases of Conjecture~\ref{conj:3_tangles} that remain are those of non-alternating Montesinos knots with three rational parameters and $u(K)\leq 4$.

\subsection*{Acknowledgements} The authors would like to thank Lukas Lewark for enlightening correspondence and for sharing an early copy of \cite{Iltgen2021Khovanov} which sparked their interest in the proper rational unknotting number. The second author would also like to thank Luisa Paoluzzi for enlightening correspondence, and he is grateful for support through the Heisenberg program of the DFG. 

\begin{figure}
  \begin{overpic}[width=250pt]{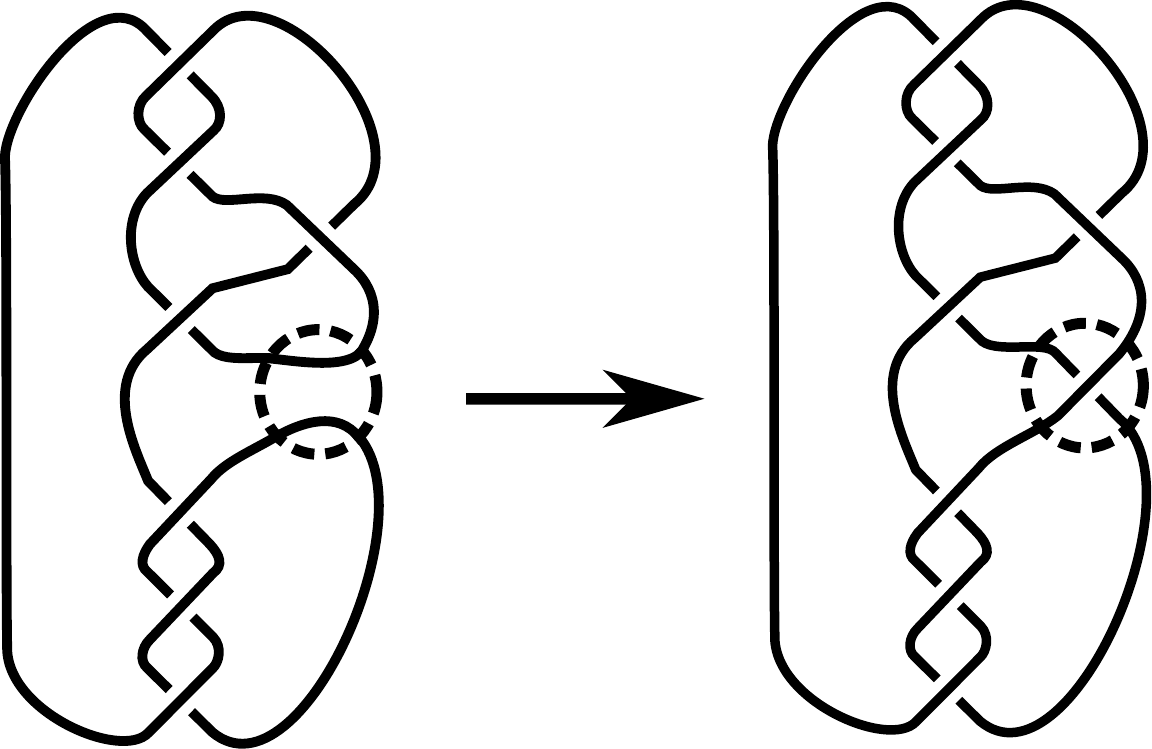}
    \put (12,-5) {\large $3_1 \# 4_1$}
    \put (80,-5) {\large $U$}
  \end{overpic}
    \vspace{0.7cm}
  \caption{The non-prime knot $3_1 \# 4_1$ can be unknotted by a non-proper RTR.}
  \label{fig:non_prime}
\end{figure}

\begin{figure}[!ht]
  \begin{overpic}[width=300pt]{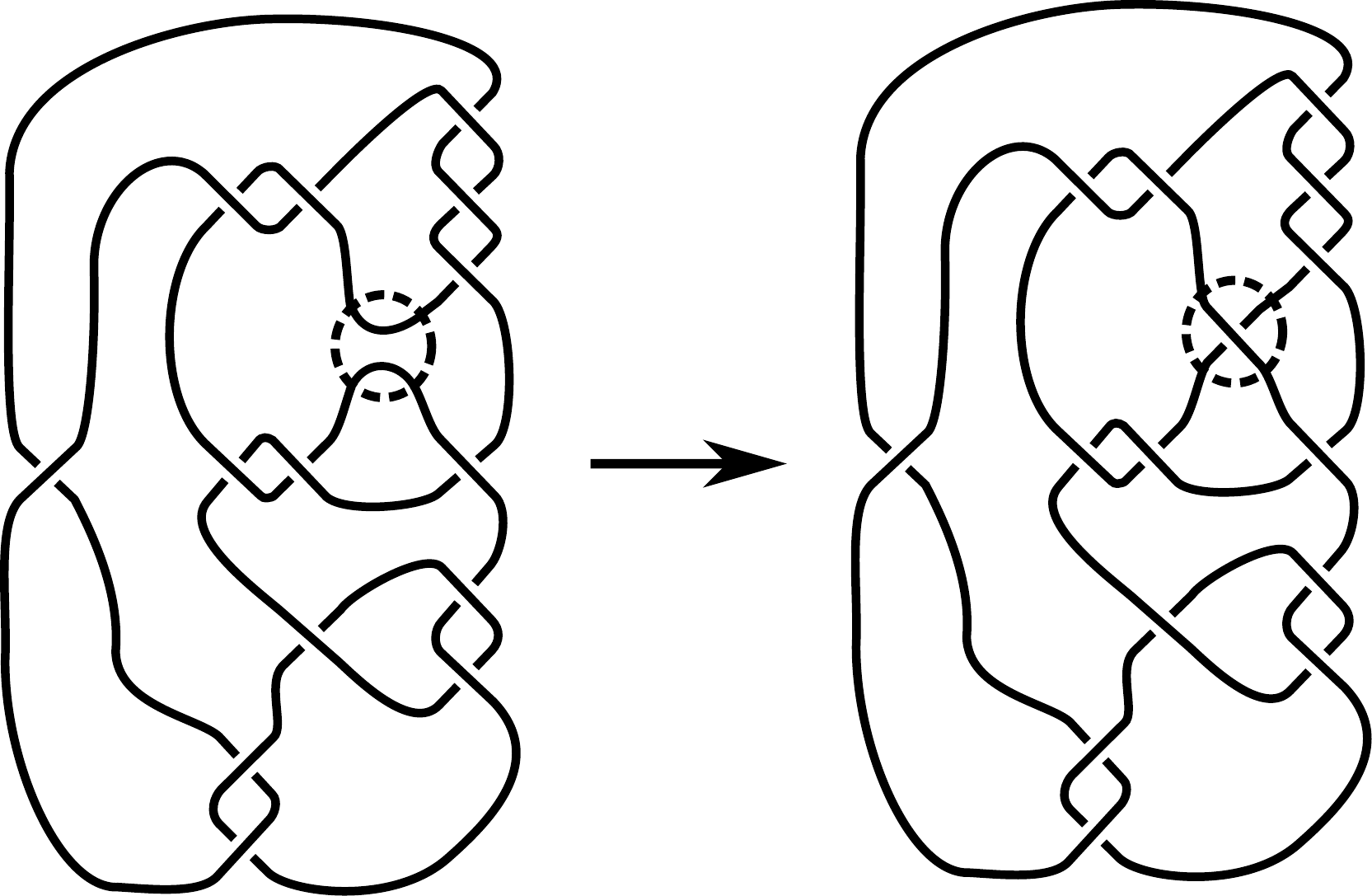}
    \put (80,-5) {\large $U$}
    \put (3,-5) {\large $\M\left(-1;\frac21,\frac32,\frac31,\frac73\right)$}
  \end{overpic}
    \vspace{0.5cm}
  \caption{The Montesinos knot $\M\left(-1;\frac21,\frac32,\frac31,\frac73\right)$ with four rational tangles that can be unknotted by a non-proper RTR.}
  \label{fig:monty}
\end{figure}

\section{Rational tangles}
We begin with a review of rational tangles and their properties. A more detailed treatment of rational tangles can be found in \cite{BurdeZieschang} or \cite{Gordon2009dehnsurgery}. A tangle in $B^3$ is a pair $(B^3, A)$, where $A$ is a properly embedded 1-manifold. We say $(B^3, A)$ is a marked tangle if $\partial B^3 \cap A$ consists of four points, and we have fixed an identification of the pairs $(\partial B^3 , \partial B^3 \cap A)$ and $(S^2,\{ NE,NW,SE,SW\})$ as shown in Figure~\ref{fig:tangle_defs}(a). Two marked tangles are considered equivalent if they are isotopic via an isotopy fixing the boundary. A rational tangle is a marked tangle $(B^3, A)$, which is homeomorphic to the trivial tangle, as shown in Figure~\ref{fig:tangle_defs}(b). For any $p/q\in \Q\cup \{1/0\}$, there is a rational tangle $R(p/q)$ built up from the tangles $R(1/0)$ and $R(0/1)$ using the relationships depicted in Figure~\ref{fig:rational_tangles}. It is a remarkable theorem of Conway that every rational tangle is equivalent as a marked tangle to precisely one of these $R(p/q)$ \cite{Conway1969algebraic}. 

\begin{figure}[!ht]
  \begin{overpic}[width=200pt]{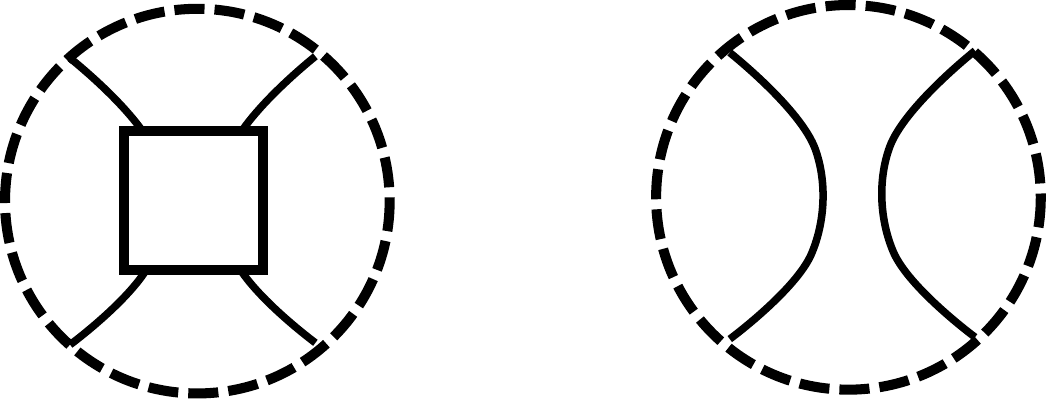}
    \put (17,-8) {(a)}
    \put (80,-8) {(b)}
    \put (-9,32) {NW}
    \put (36,32) {NE}
    \put (36,0) {SE}
    \put (-9,0) {SW}
    \put (17,16) {$T$}
  \end{overpic}
    \vspace{0.7cm}
  \caption{(a) The identifications on the boundary of a marked tangles and (b) the trivial tangle}
  \label{fig:tangle_defs}
\end{figure}

\begin{figure}[!ht]
  \begin{overpic}[width=300pt]{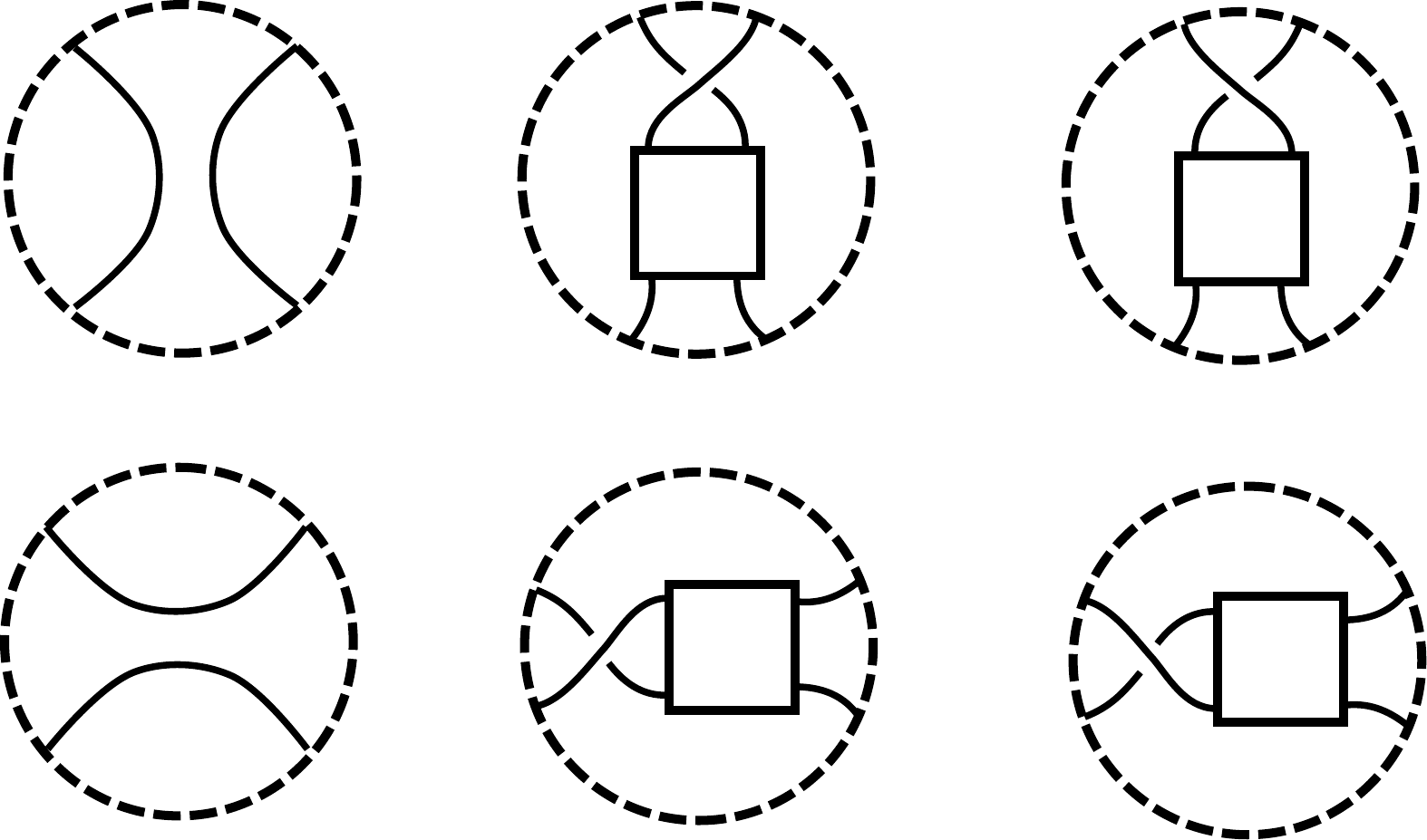}
    \put (10,29) {\large $\frac01$}
    \put (10,-5) {\large$\frac10$}
    \put (46,29) {\large $\frac{p+q}{q}$}
    \put (46,-5) {\large $\frac{p}{q+p}$}
    \put (85,29) {\large $\frac{p-q}{q}$}
    \put (85,-5) {\large $\frac{p}{q-p}$}
    \put (46,42) {$p/q$}
    \put (48,12) {$p/q$}
    \put (85,42) {$p/q$}
    \put (87,12) {$p/q$}
  \end{overpic}
    \vspace{0.7cm}
  \caption{Building up rational tangles.}
  \label{fig:rational_tangles}
\end{figure}
Using the relationships in Figure~\ref{fig:rational_tangles} it is easy to check that the  connectivity of the endpoints of the tangle $R(p/q)$ is determined by the parity of $p$ and $q$ as shown in Figure~\ref{fig:rational_parity}.
\begin{figure}[!ht]
\vspace{0.5cm}
  \begin{overpic}[width=300pt]{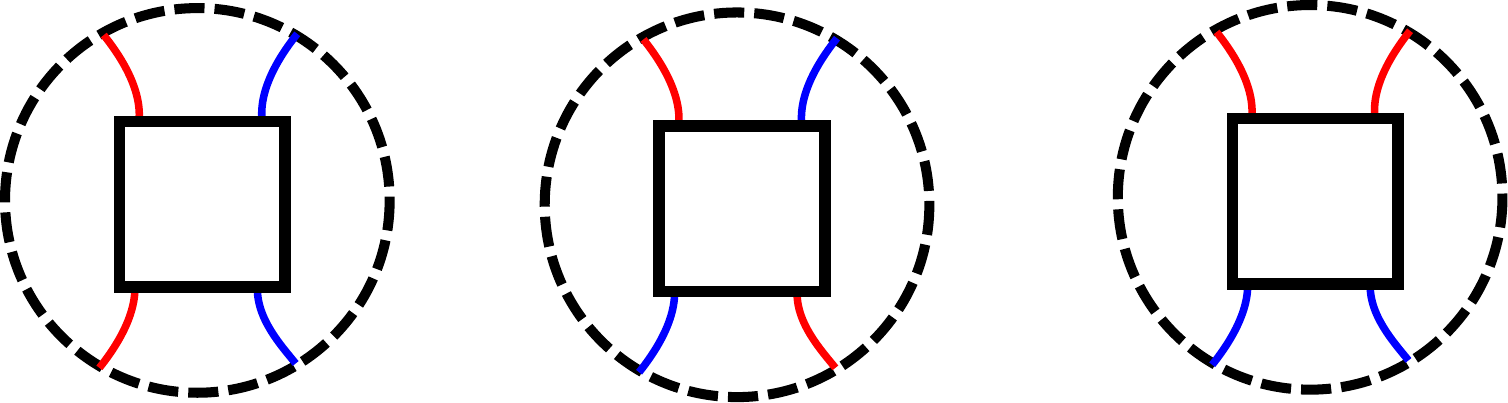}
    \put (13,12) {\Large $\frac{p}{q}$}
    \put (47,12) {\Large $\frac{p}{q}$}
    \put (85,12) {\Large $\frac{p}{q}$}
    \put (7,-7) {$p$ even}
    \put (45,-7) {$p$ odd}
    \put (84,-7) {$p$ odd}
     \put (8,-12) {$q$ odd}
    \put (45,-12) {$q$ odd}
    \put (83,-12) {$q$ even}
    \put (4,0) {$a$}
    \put (4,25) {$a$}
    \put (21,0) {$b$}
    \put (21,25) {$b$}
    \put (40,-1) {$b$}
    \put (40,25) {$a$}
    \put (56,0) {$a$}
    \put (56,25) {$b$}
    \put (78,0) {$b$}
    \put (78,25) {$a$}
    \put (95,0) {$b$}
    \put (95,25) {$a$}
  \end{overpic}
  \vspace{1.3cm}
  \caption{The configuration of endpoints on the boundary of a rational tangle depend on the parities of $p$ and $q$ (the strands of the same color are connected).}
  \label{fig:rational_parity}
\end{figure}

Thus we see that if one performs a RTR that replaces a copy of $R(r/s)$ with $R(p/q)$, this is a proper rational tangle replacement if and only if $r\equiv p \bmod 2$ and $s\equiv q \bmod 2$.
However, one easily finds that this parity condition is equivalent to the condition that $|rq-sp|\equiv 0 \bmod 2$.
\footnote{One can either check this case by case for all possible parities of $p,q,r$ and $s$ or one can use the fact that $rq-sp$ is the determinant of the matrix $\begin{pmatrix}
p & r\\ q & s
\end{pmatrix}$ and consider when this matrix is invertible mod 2.}
We summarize this discussion in the following lemma.
\begin{lem}\label{lem:parity_cond}
A rational tangle replacement that replaces a copy of $R(r/s)$ with a copy of $R(p/q)$ is a proper rational tangle replacement if and only if
\[|rq - ps|\equiv 0 \bmod 2.\]
\end{lem}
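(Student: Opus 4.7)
The plan is to derive the lemma directly from the parity classification of endpoint configurations summarized in Figure~\ref{fig:rational_parity}, so the whole statement reduces to a small arithmetic observation over $\F_2$.

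First, I would record the precise meaning of ``proper'' in this setting. From Figure~\ref{fig:rational_parity}, the connectivity pattern of the four boundary points of $R(p/q)$ depends only on the residues $(p \bmod 2, q \bmod 2)$, and the three possible connectivity patterns correspond bijectively to the three nonzero elements of $\F_2^2$, namely $(0,1), (1,0), (1,1)$ (the pair $(0,0)$ is excluded since $\gcd(p,q)=1$). Therefore replacing $R(r/s)$ by $R(p/q)$ is proper if and only if $(r,s) \equiv (p,q) \pmod 2$.

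Next I would verify that this congruence condition on pairs is equivalent to $rq - ps \equiv 0 \pmod 2$. The forward direction is immediate: if $r \equiv p$ and $s \equiv q$ mod $2$, then $rq - ps \equiv pq - pq = 0 \pmod 2$. For the converse, I would argue via the footnote's linear-algebra picture. The vectors
\[
\vbold = \begin{pmatrix} p \\ q \end{pmatrix}, \qquad \mathbf{w} = \begin{pmatrix} r \\ s \end{pmatrix}
\]
both live in $\F_2^2 \setminus \{0\}$ (nonzero because each of $\gcd(p,q)$ and $\gcd(r,s)$ equals $1$). The condition $rq - ps \equiv 0 \pmod 2$ says that the determinant of $(\vbold \mid \mathbf{w})$ vanishes in $\F_2$, so the two columns are linearly dependent over $\F_2$. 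Two nonzero vectors in $\F_2^2$ that are linearly dependent over $\F_2$ must actually be equal, giving $(r,s) \equiv (p,q) \pmod 2$ as desired.

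Combining these two equivalences yields the lemma. I do not expect any step to be a real obstacle; the only thing to double-check is that the three permitted parity classes indeed realize the three distinct endpoint configurations in Figure~\ref{fig:rational_parity}, which is exactly what that figure asserts (and can be confirmed inductively from the two tangle operations in Figure~\ref{fig:rational_tangles}, since each operation modifies the parities of $p$ and $q$ in a predictable way).
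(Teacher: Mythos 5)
Your proposal is correct and follows essentially the same route as the paper: the paper likewise reduces properness to the congruence $(r,s)\equiv(p,q)\bmod 2$ via the endpoint-parity classification of Figure~\ref{fig:rational_parity}, and then notes (in a footnote) the same mod-2 determinant argument you spell out. Your only addition is to make explicit the observation that two nonzero, linearly dependent vectors in $\F_2^2$ coincide, which is a fine way to fill in the paper's ``one easily finds.''
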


Now we recall the general statement of the Montesinos trick that we will use.
\begin{lem}\label{lem:fullmonty}
Suppose that $L'$ is obtained from $L$ by replacing a copy of $R(p/q)$ with a copy of $R(r/s)$. Then there is a 3-manifold $M$ with a single toroidal boundary component and slopes $\alpha$ and $\beta$ on $\partial M$ such that $M(\alpha)\cong \Sigma(L')$, $M(\beta)\cong \Sigma(L)$ and $\Delta(\alpha, \beta)=|ps-rq|$. 
\end{lem}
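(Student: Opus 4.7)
The plan is to use the standard two-fold branched cover construction. Let $B^3 \subseteq S^3$ be the ball containing the rational tangle being replaced, and let $X = \overline{S^3 \setminus B^3}$. Write $L_0 = L \cap X = L' \cap X$, a tangle in $X$ with four boundary points on $\partial B^3$. I would set $M = \Sigma(X, L_0)$, the double cover of $X$ branched along $L_0$. Since $\partial B^3 = S^2$ carries four branch points, its double branched cover is a torus, so $\partial M$ is a single torus. Because $L$ and $L'$ agree with $L_0$ outside $B^3$, both $\Sigma(L)$ and $\Sigma(L')$ are obtained from $M$ by gluing in the double branched cover of the rational tangle that fills the ball.

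The next step is to use the classical fact that for any $p/q \in \Q \cup \{1/0\}$, the double cover $\Sigma(B^3, R(p/q))$ is a solid torus. This gives both $\Sigma(L)$ and $\Sigma(L')$ as Dehn fillings of $M$; call the corresponding meridian slopes (bounding discs in the two solid tori) $\beta$ and $\alpha$, so $M(\alpha) \cong \Sigma(L')$ and $M(\beta) \cong \Sigma(L)$. What remains is to identify the slopes $\alpha, \beta$ on $\partial M$ in explicit coordinates and compute their distance.

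For this, I would fix a basis of $H_1(\partial M)$ by choosing $\mu$ and $\lambda$ to be the meridians of the solid tori obtained by filling with $R(0/1)$ and $R(1/0)$ respectively; that $\mu$ and $\lambda$ form a basis follows because the two trivial tangles differ by a single twist, so their associated filling slopes intersect once on the pillowcase. The core claim is then that the filling slope corresponding to $R(p/q)$ is $p\mu + q\lambda$ (up to sign). I would prove this by induction on the construction of $R(p/q)$ using the twisting operations in Figure~\ref{fig:rational_tangles}: each horizontal or vertical twist on the tangle lifts to a Dehn twist along a curve in the boundary torus of the solid torus filling, which acts on slopes by the corresponding unimodular transformation, matching the arithmetic $p/q \mapsto (p \pm q)/q$ or $p/q \mapsto p/(q \pm p)$ on the tangle side. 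With the base cases $R(0/1) \leftrightarrow \mu$ and $R(1/0) \leftrightarrow \lambda$ built into the choice of basis, this induction yields $\alpha = r\mu + s\lambda$ and $\beta = p\mu + q\lambda$, whence $\Delta(\alpha,\beta) = |ps - rq|$ is the absolute value of the determinant of the change-of-basis matrix.

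The main obstacle is the bookkeeping in the inductive identification of tangle parameters with torus slopes; everything else is formal. In particular, one must be careful that the two generating twists correspond to Dehn twists along the correct curves on $\partial M$, and that the choice of orientations is consistent so that the claimed equality of slopes holds on the nose rather than merely up to some $\mathrm{GL}_2(\Z)$ ambiguity. Once this correspondence is nailed down, the distance formula is immediate.
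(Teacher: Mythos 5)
Your proposal is correct and follows essentially the same route as the paper: pass to the double branched cover $M$ of the tangle complement, observe that both $\Sigma(L)$ and $\Sigma(L')$ are Dehn fillings of $M$ along the meridians of the solid tori covering the rational tangles, and compute the distance of the slopes $p\mu+q\lambda$ and $r\mu+s\lambda$ as a determinant. The only difference is that the paper simply cites \cite[Theorem~4.3]{Gordon2009dehnsurgery} for the fact that the filling slope of $R(a/b)$ is $a\widetilde{\mu}+b\widetilde{\lambda}$, whereas you propose to reprove it by induction on the twist operations; that is a sound (if more laborious) way to establish the same key input.
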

\begin{proof}
Suppose that the rational tangle replacement takes place in the interior of a 3-ball $B \subseteq S^3$. Since the double branched cover of a rational tangle is homeomorphic to $S^1\times D^2$, we see that both $\Sigma(L)$ and $\Sigma(L')$ are obtained by Dehn filling on $M$, where $M$ is the manifold obtained as the double branched cover of $S^3 \setminus \mathring{B}$. Let $\Sigma(\partial B)$ be the double cover of the sphere $\partial B$	 branched over $L\cap \partial B= L'\cap \partial B$. As explained in \cite[Theorem~4.3]{Gordon2009dehnsurgery}, there is a choice of curves $\widetilde{\mu}$ and $\widetilde{\lambda}$ forming a basis for $H_1(\Sigma(\partial B);\Z)$, such that for any $a/b \in \Q \cup \{1/0\}$ the curve in the boundary of the double branched cover of $R(a/b)$ that bounds a disk is $a\widetilde{\mu} + b \widetilde{\lambda}$. Thus we see that $\Sigma(L)$ and $\Sigma(L')$ are obtained by filling $M$ along the slopes $\alpha = p\widetilde{\mu} + q \widetilde{\lambda}$ and $\beta = r\widetilde{\mu} + s \widetilde{\lambda}$, respectively. Since the curves $\widetilde{\mu}$ and $\widetilde{\lambda}$ intersect in a single point we the see that distance between $\alpha$ and $\beta$ is given by $\Delta(\alpha, \beta)=|ps-rq|$, as required.
\end{proof}
From this we easily deduce Lemma~\ref{lem:monty}.
\montytrick*
\begin{proof}
Lemma~\ref{lem:parity_cond} implies that if the RTR in Lemma~\ref{lem:fullmonty} is a proper RTR, then the distance $\Delta(\alpha, \beta)=|ps-rq|$ is even as required.
\end{proof}

Finally, we estimate how many crossing changes are required to perform a proper RTR.
\begin{lem}\label{lem:unknotting}
If $q$ is even, then the rational tangle $R(p/q)$ can be converted into $R(1/0)$ using at most $|q/2|$ crossing changes.
\end{lem}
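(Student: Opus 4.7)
The plan is to proceed by induction on $|q|$. The base case $q=0$ is immediate, since $R(p/0) = R(1/0)$ requires no crossing changes.

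For the inductive step with $q$ even and $|q|\geq 2$, I would use a continued fraction expansion $p/q = [a_1,a_2,\ldots,a_n]$ together with the standard twist-box diagram of $R(p/q)$, in which the $i$-th twist box contains $|a_i|$ crossings alternating between horizontal and vertical. Changing a single crossing in the innermost (or equivalently outermost, depending on convention) twist box flips its sign, after which a Reidemeister~II move cancels the resulting opposite-sign pair, reducing that box to $|a_n|-2$ crossings. The resulting tangle is $R(p'/q')$ where the new continued fraction differs from $p/q$ by $\pm 2$ in the final entry, and the standard convergent recursion $q_k = a_k q_{k-1} + q_{k-2}$ gives $q' = q \mp 2q_{n-1}$. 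Since successive convergent denominators satisfy $q_{n-1} < q_n = |q|$, an appropriate choice of sign yields $|q'| < |q|$.

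To conclude, observe that $q'$ is automatically even. Indeed, a crossing change is itself a proper RTR, as it replaces a copy of $R(1/1)$ at the crossing with $R(-1/1)$, and $|1\cdot 1 - (-1)\cdot 1| = 2$ is even, so Lemma~\ref{lem:parity_cond} confirms properness. Proper RTRs preserve the boundary connectivity and hence the parities of both numerator and denominator of the outer rational tangle, so $q' \equiv q \pmod{2}$ is even. Combined with $|q'| < |q|$, this gives $|q'| \leq |q| - 2$. The inductive hypothesis then furnishes at most $|q'|/2 \leq |q|/2 - 1$ further crossing changes, for a total of at most $|q|/2$.

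The main point requiring verification is the convergent identity $q' = q \mp 2q_{n-1}$ together with the sign choice making $|q'|<|q|$; this is a short consequence of the recursion for continued-fraction denominators and the monotonicity $q_{n-1}<q_n$, assuming (as we may by the ambiguity $[\ldots,a_{n-1},1]=[\ldots,a_{n-1}+1]$) that $|a_n|\geq 2$ so that there is indeed a crossing to change in the relevant twist box.
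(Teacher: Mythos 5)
Your proof is correct in substance and shares the paper's inductive skeleton (induct on $|q|$; exhibit one crossing change that strictly decreases $|q|$ while preserving its parity, hence decreases it by at least $2$), but the crossing you change and the resulting reduction are genuinely different. The paper writes $p/q=(nq+r)/q$ with $0<r<q$ and exhibits $R(p/q)$ as $R(r/(q-2r))$ with a two-crossing twist box and $n$ further twists attached; changing a crossing in that two-crossing box sends $q\mapsto q-2r$, so the reduction is governed by $p\bmod q$. You instead change a crossing in a twist box of the standard continued-fraction form, and the reduction $q\mapsto q\mp 2q_{n-1}$ is governed by the previous convergent denominator, which satisfies $pq_{n-1}\equiv\pm1\pmod q$. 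These are different moves in general (e.g.\ for $p/q=19/16$ the paper's move gives $|q'|=10$ while yours gives $|q'|=6$), though both yield $|q'|\leq|q|-2$ and hence the same bound $|q|/2$.

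Two points in your write-up need tightening. First, ``innermost (or equivalently outermost, depending on convention)'' is not an innocent convention issue: the twist box whose entry plays the role of $a_n$ in the recursion $q_n=a_nq_{n-1}+q_{n-2}$ (so that perturbing it moves $q$ by $2q_{n-1}$) is the \emph{innermost} one; a crossing change in the outermost box replaces the leading entry and alters only the numerator, $p\mapsto p\mp 2q$, leaving $q$ fixed, so that choice does not advance the induction. Second, you do not get to choose the sign in $q'=q\mp 2q_{n-1}$: a crossing change in a box of $|a_n|$ like-signed crossings necessarily produces $|a_n|-2$ crossings after the Reidemeister~II cancellation, so the sign is forced. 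The argument still goes through because, after normalising to a continued-fraction expansion with all entries of the same sign (which is also what guarantees the monotonicity $0<q_{n-1}<q_n=|q|$ that you invoke), the forced sign is the one that decreases $|q|$; this should be said explicitly, since with mixed-sign entries neither the monotonicity nor the decrease is automatic.
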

\begin{proof}
We prove this by induction on $|q|$. The base case is $q=0$ for which the lemma is vacuously true. Now suppose that $|q|>0$. By reflecting the tangle if necessary, we can assume that $p/q>0$. Now there is an integer $n\geq 0$ such that $p/q$ can be written in the form $\frac{p}{q}=\frac{nq+r}{q}$, where $0<r<p$. Using the arithmetic associated to rational tangles, we see that $R(p/q)$ can be obtained from $R\left(\frac{r}{q-2r}\right)$ by adding two horizontal twists followed by $n$ vertical twists. Thus by changing one of the crossings in the horizontal twists $R(p/q)$ can be converted into $R\left(\frac{p-2rn}{q-2n}\right)$ by a single crossing change. This is illustrated in Figure~\ref{fig:unknotting}. Since $0<r<q$, we have $|q-2r|<q$ and hence we can assume inductively that $R\left(\frac{p-2rn}{q-2r}\right)$ can be converted into $R(1/0)$ using at most $|\frac{q}{2}|-1$ crossing changes. It follows that $R(p/q)$ can be converted into $R(1/0)$ using at most $|\frac{q}{2}|$ crossing changes, as required.
\end{proof}

\begin{figure}[!ht]
  \begin{overpic}[width=330pt]{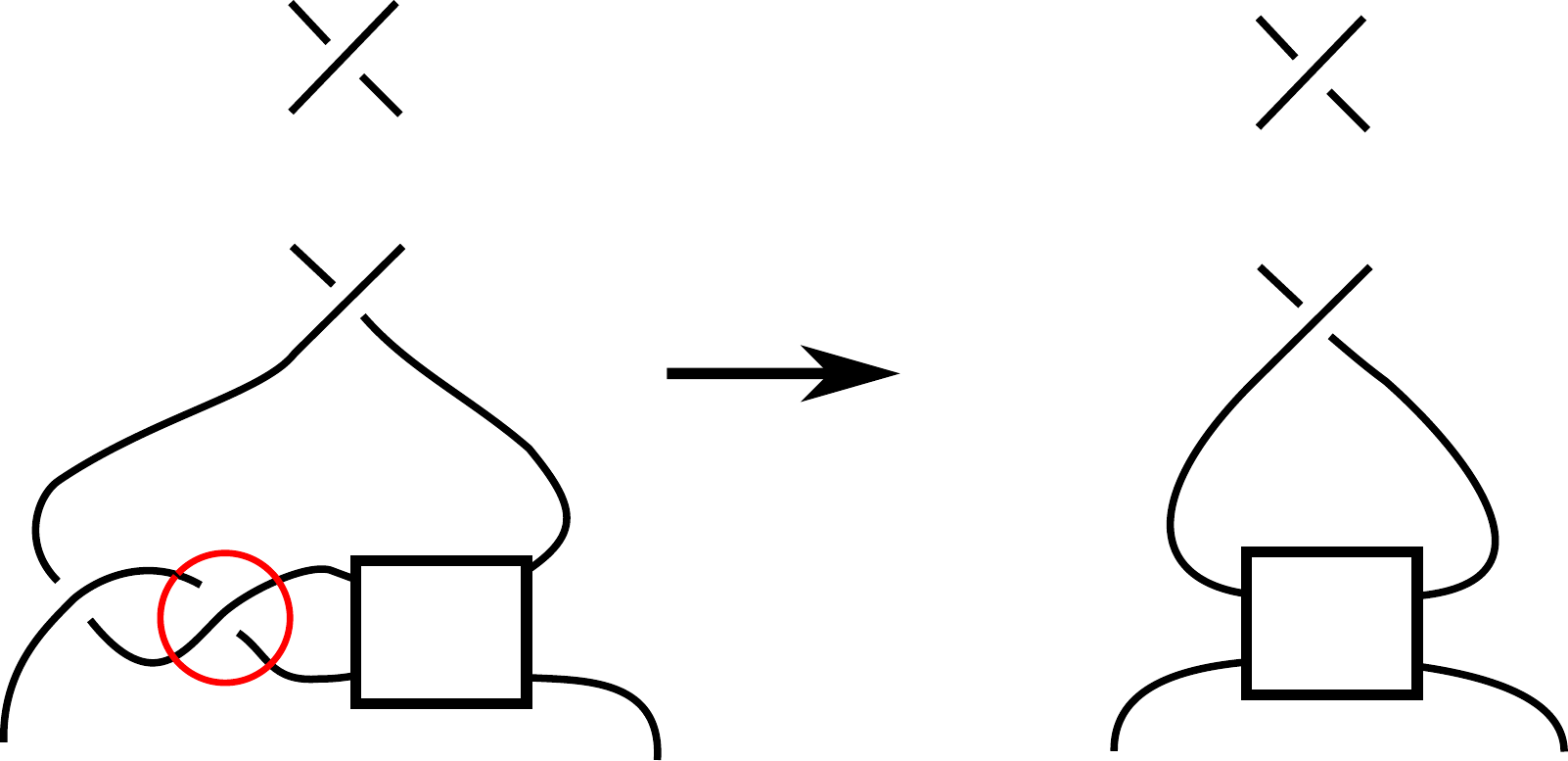}
    \put (25,7) {$\frac{r}{q-2r}$}
    \put (21,36) {\Large $\vdots$}
    \put (83,35) {\Large $\vdots$}
    \put (81,8) { $\frac{r}{q-2r}$}
    \put (-4,38){$n$ crossings $\vbigl\{ \vbigr.$}
    \put (57,36){$n$ crossings $\vbigl\{ \vbigr.$}
    \put (0,-3){$\underbrace{\hspace{5cm}}_{R(p/q)}$}
    \put (65,-3){$\underbrace{\hspace{4.5cm}}_{R\left(\frac{r+n(q-2r)}{q-2r}\right)= R\left(\frac{p-2nr}{q-2r}\right)}$}
  \end{overpic}
  \vspace{1cm}
  \caption{Changing the circled crossing has the effect of transforming $R(p/q)$ into $R(\frac{p-2nr}{q-2r})$.}
  \label{fig:unknotting}
\end{figure}

\section{The proof of Theorem~\ref{thm:alt_char}}

\altthm*
\begin{proof}
The implication \eqref{it:uq=1}$\Rightarrow$\eqref{it:surgery} follows from Corollary~\ref{cor:prun1}. The implication \eqref{it:tangle_replace}$\Rightarrow$\eqref{it:uq=1} follows from the definition of proper rational unknotting number. It remains to establish\eqref{it:surgery}$\Rightarrow$\eqref{it:tangle_replace}. Thus suppose that $K$ is an alternating knot such that $\Sigma(K) \cong S_{p/q}^3(L)$ with $p/q\in \Q$ such that $q$ is even. Note that $p$ is odd since $K$ is a knot.

First we show that we can assume that $|p/q|>1$. Since the double branched cover of an alternating knot is an $L$-space \cite{Ozsvath2005branched}, we have that $|p/q|\geq 2g(L)-1$ \cite{Ozsvath2003Absolutely_graded, Ozsvath2011rationalsurgery}. So if $|p/q|<1$, then $L$ is the unknot. However for surgeries on the unknot we have that $S_{\frac{p}{q+np}}^3(U) \cong S_{\frac{p}{q}}^3(U)$ for any integer $n\in\Z$. Since $p$ is odd and $q$ is even we can find $n\in\Z$ such that $q'=p+np$ is even and satisfying $|q'|<|p|$. Thus by replacing the slope $p/q$ with $p/q'$, we are free to assume that $\Sigma(K) \cong S_{p/q}^3(L)$ for $|p/q|>1$. Furthermore, by reversing orientations if necessary, we can assume that $p/q<-1$. If we write $p/q$ in the form $p/q=-n+r/q$, where $1\leq r <q$, then Theorem~1.3 of \cite{McCoy2015noninteger} implies that $K$ admits an alternating diagram that can be obtained by replacing the dealternating crossing of an almost alternating diagram of the unknot by a copy of $R((q-r)/r)$. In the conventions of \cite[Section~5]{McCoy2015noninteger}, the dealternating crossing is considered as a copy of $R(-1/1)$. Since we are assuming $q$ is even, $r$ is odd. This implies that $q-r$ is also odd and hence this RTR is a proper RTR, as required.
\end{proof}

\section{Montesinos knots}\label{sec:Montesinos_knots}
We briefly establish our conventions on Montesinos knots. We use the notation $\M(e; \frac{p_1}{q_1}, \dots, \frac{p_k}{q_k})$ to depict the knot or link with a diagram as shown in Figure~\ref{fig:Montesinos_diagram}. For an explicit example see the left hand side of Figure~\ref{fig:monty} which depicts a copy of $\M\left(-1;\frac21,\frac32,\frac31,\frac73\right)$. Note that the same link has many many equivalent presentations in this notation. The rational parameters can be cyclically reordered and their overall order can be reversed without changing the isotopy type. Furthermore, we have the relations that
\[
\M\left(e;\frac{p_1}{q_1}, \dots , \frac{p_{k}}{q_{k}}, \frac{1}{0}\right)=\M\left(e;\frac{p_1}{q_1}, \dots , \frac{p_{k}}{q_{k}}\right)
\]
and
\[
\M\left(e;\frac{p_1}{q_1}, \dots , \frac{p_{k}}{q_{k}}\right)=\M\left(e\pm 1;\frac{p_1}{q_1}, \dots , \frac{p_{k-1}}{q_{k-1}}, \frac{p_{k}}{q_{k}\mp p_k}\right).
\]

These conventions are such that the double branched cover of $\M(e; \frac{p_1}{q_1}, \dots, \frac{p_k}{q_k})$ is the Seifert fibered space $S^2(e; \frac{p_1}{q_1}, \ldots, \frac{p_k}{q_k})$, which we take to be the 3-manifold with the surgery presentation given in Figure~\ref{fig:sfs_as_surgery}. A full account of Montesinos links and their double branched covers can be found in \cite[Chapter~12]{BurdeZieschang} for example.

\begin{figure}[!ht]
  \begin{overpic}[width=300pt]{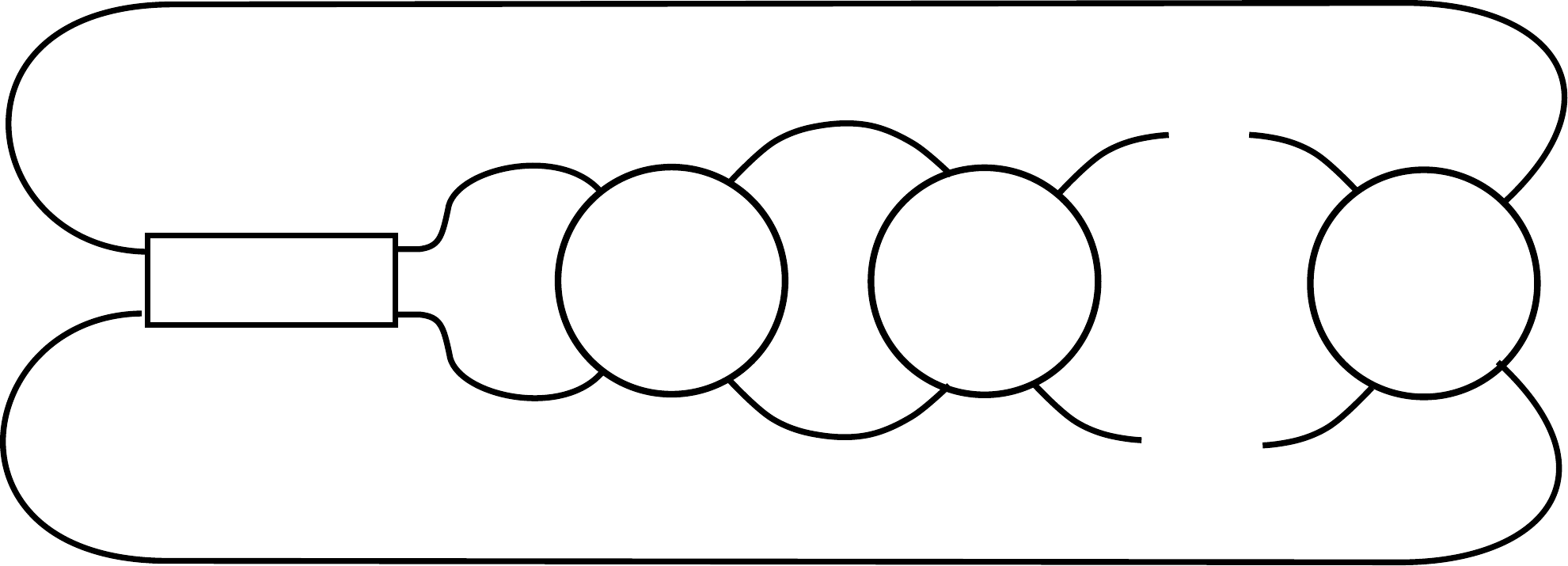}
    \put (41,17) {\large $\frac{p_1}{q_1}$}
    \put (61,17) {\large $\frac{p_2}{q_2}$}
    \put (89,17) {\large $\frac{p_k}{q_k}$}
    \put (14,17) {$e$}
    \put (74,16.5) {\Large $\dots$}
  \end{overpic}
  \caption{The Montesinos link $\M(e; \frac{p_1}{q_1}, \dots, \frac{p_k}{q_k})$.}
  \label{fig:Montesinos_diagram}
\end{figure}

\begin{figure}[!ht]
  \begin{overpic}[width=250pt]{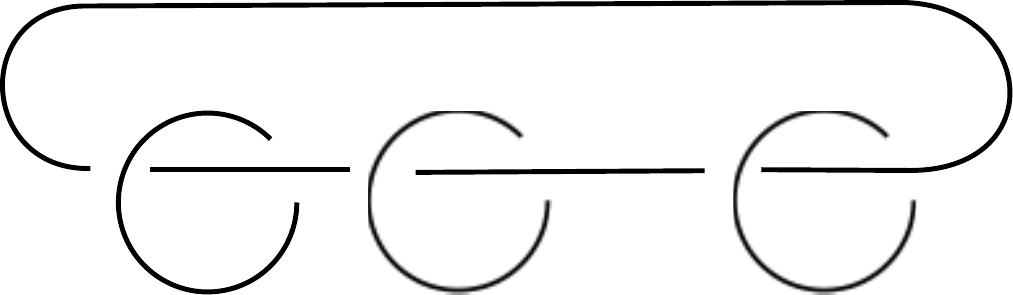}
    \put (-5, 18) {\large $e$}
    \put (60, 5) {\large $\dots$}
    \put (18, -5) {\large $\frac{p_1}{q_1}$}
    \put (43, -5) {\large $\frac{p_2}{q_2}$}
    \put (80, -5) {\large $\frac{p_k}{q_k}$}
  \end{overpic}
  \vspace{0.5cm}
  \caption{Surgery presentation of the Seifert fibered space $S^2(e; \frac{p_1}{q_1}, \ldots, \frac{p_k}{q_k})$.}
  \label{fig:sfs_as_surgery}
\end{figure}

\montythreestrands*
\begin{proof}
We begin with the implication \eqref{it:monty_uq=1} $\Rightarrow$ \eqref{it:monty_surgery}. Let $K$ be such a Montesinos knot with $u_q(K)=1$ and $u(K)\geq 5$. By choosing a suitable diagram we can assume that $K$ can be unknotted by replacing a copy of $R(r/q)$ with a copy of $R(1/0)$, where $q\geq 2$ is even. By Lemma~\ref{lem:unknotting}, we have that $5\leq u(K)\leq q/2$. So Lemma~\ref{lem:fullmonty} implies that we have $\Sigma(K)\cong S_{p/q}^3(L)$ for some knot $L$ in $S^3$ and some $p/q\in \Q$ with $q\geq 10$. Since $K$ has three rational parameters, $\Sigma(K)$ is an atoroidal Seifert fibered space. By Thurston's trichotomy for knots in $S^3$, $L$ is either a torus knot, a hyperbolic knot or a satellite knot. If $L$ is a torus knot, then we already have the desired conclusion. By Lackenby and Meyerhoff's bound on exceptional surgeries  \cite{Lackenby2013exceptional}, the condition $q\geq 10$ implies that $L$ is not a hyperbolic knot. Thus it remains to consider the case that $L$ is a satellite knot. Since $L$ admits a non-integer Seifert fibered surgery, Miyazaki and Motegi have shown that $L$ is cabled exactly once \cite{Miyazaki1997Seifert}. That is, $L$ is a cable of a knot $L'$ where $L'$ is itself a torus knot or a hyperbolic knot. Suppose that $L$ is the cable $C_{\alpha, \beta}\circ L'$. Since $\Sigma(L)$ is atoroidal, the slope $p/q$ must take the form $p/q= \alpha \beta \pm 1/q$. For such a slope we have $S_{p/q}^3(L)\cong S_{p/(q\alpha^2)}^3(L')$ (see for example, \cite[Lemma~3.3]{Gordon1983Satellite}). Since $q\alpha^2\geq 10$, $L'$ cannot be a hyperbolic knot. So we conclude that $\Sigma(K)\cong S_{p/(q\alpha^2)}^3(L')$ for a $L'$ a torus knot. Thus we have established \eqref{it:monty_surgery} even when $L$ is a satellite knot.

Next, the implication \eqref{it:monty_surgery} $\Rightarrow$ \eqref{it:monty_list}. This follows from computing the Seifert fibered surgeries on torus knots. If we perform $p/q$-surgery on the positive torus knot $T_{a,c}$, where $a,c>1$ we obtain the Seifert fibered space $S^2(0;\frac{a}{b}, \frac{c}{d}, \frac{p}{q}-ac)$, where $\frac{b}{a}+\frac{d}{c}=\frac{1}{ac}$. 

This calculation is performed in detail in \cite[Lemma~4.4]{Owens2012negdef} (see also \cite{Moser1971elementary}). Likewise, performing $p/q$-surgery on the negative torus knot $T_{-a,c}$, we obtain a space of the form $S^2(0;-\frac{a}{b}, -\frac{c}{d}, \frac{p}{q}+ac)$. So in either case, $p/q$-surgery on a torus knot with $q$ even yields a Seifert fibered space of the form $S^2(0;\frac{a}{b}, \frac{c}{d}, \frac{r}{s})$, where $\frac{b}{a}+\frac{d}{c}=\pm \frac{1}{ac}$ and $s$ is even. This implies that if $\Sigma(K)$ arises by such a surgery, then $K$ can be written in the required form. 

Finally, the implication \eqref{it:monty_list} $\Rightarrow$ \eqref{it:monty_uq=1}.
Given $K=\M(0;\frac{a}{b}, \frac{c}{d}, \frac{r}{s})$ with $\frac{b}{a}+\frac{d}{c}=\pm \frac{1}{ac}$ and $s$ even, we can perform a proper RTR to replace the $\frac{r}{s}$ tangle with a copy of the trivial tangle $R(1/0)$. This converts $K$ into a 2-bridge knot, which is easily seen to be the unknot, since $\frac{b}{a}+\frac{d}{c}=\pm \frac{1}{ac}$.
\end{proof}

We conclude with an easy upper bound on the proper rational unknotting numbers of Montesinos knots.

\begin{lem}\label{lem:unknotting_Montesinos}
For any Montesinos knot $K=\M(e;\frac{p_1}{q_1}, \dots , \frac{p_r}{q_r})$, with $r\geq 2$ we have $u_q(K)\leq r-1$.
\end{lem}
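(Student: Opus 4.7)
The plan is to prove $u_q(K)\leq r-1$ by induction on $r\geq 2$. For the base case $r=2$, $K=\M(e;p_1/q_1,p_2/q_2)$ is a 2-bridge knot; I would first absorb $e$ into the second tangle via the relation $\M(e;\ldots,p_k/q_k)=\M(e-1;\ldots,p_k/(q_k+p_k))$ iterated $|e|$ times to rewrite $K=\M(0;p_1/q_1,p_2'/q_2')$ with $q_2'=q_2+ep_2$. Then replace $R(p_2'/q_2')$ by a tangle $R(P/Q)$ such that $\M(0;p_1/q_1,P/Q)$ is the unknot; since this Montesinos link is the 2-bridge knot with fraction $p_1/q_1+P/Q$, this amounts to the numerator identity $|p_1Q+q_1P|=1$. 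By Bezout's identity (using $\gcd(p_1,q_1)=1$), the family of solutions is $(P,Q)=(P_0+tp_1,Q_0-tq_1)$ for $t\in\Z$, and every such pair automatically satisfies $\gcd(P,Q)=1$. Within this family, the parity condition $|p_2'Q-q_2'P|\equiv 0\pmod 2$ for a proper RTR becomes $|A-tB|\equiv 0\pmod 2$ after substitution, where $B=p_2'q_1+q_2'p_1$ equals $\pm$ the determinant of the 2-bridge knot $K$; since $K$ is a knot its determinant is odd, so $B$ is odd and $t$ can always be chosen to achieve the desired parity.

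For the inductive step with $r\geq 3$, let $K=\M(e;p_1/q_1,\ldots,p_r/q_r)$. The plan is to perform a single proper RTR reducing $K$ to a Montesinos knot with $r-1$ rational tangles, and then apply the inductive hypothesis. The order $|H_1(\Sigma(K);\Z)|$, which equals $|\det(K)|$ and is therefore odd, is given by the Seifert fibered space formula $|ep_1\cdots p_r+\sum_{i}q_i\prod_{j\neq i}p_j|$; if all $p_i$ were even then every summand would be even, contradicting this oddness. Hence at least one $p_i$ is odd, and by cyclic reordering we may assume $p_r$ is odd. Since $p_r$ is odd, applying the Montesinos relation $\M(e;\ldots,p_r/q_r)=\M(e+1;\ldots,p_r/(q_r-p_r))$ at most once allows us to further assume $q_r$ is even. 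The replacement $R(p_r/q_r)\to R(1/0)$ then has distance $|q_r|$, which is even, so by Lemma~\ref{lem:parity_cond} it is a proper RTR, and the absorption rule from the paper yields $K'=\M(e';p_1/q_1,\ldots,p_{r-1}/q_{r-1})$ with $r-1$ rational tangles. Both $R(p_r/q_r)$ and $R(1/0)$ belong to the parity class ($p$ odd, $q$ even), so by Figure~\ref{fig:rational_parity} they share the same boundary connectivity; thus $K'$ has the same number of components as $K$, and hence is again a Montesinos knot. The inductive hypothesis gives $u_q(K')\leq r-2$, so $u_q(K)\leq 1+(r-2)=r-1$.

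The main obstacle is the parity manipulation in the inductive step: one must both invoke the Seifert fibered determinant formula to force some $p_i$ to be odd, and then apply the Montesinos relation to flip the parity of the corresponding $q_i$, so that a proper RTR replacing that tangle with $R(1/0)$ is available. The preservation of component count under the replacement, needed for induction to apply, follows from the fact that the boundary connectivity of a rational tangle depends only on the parity class of $(p,q)$, as recorded in Figure~\ref{fig:rational_parity}.
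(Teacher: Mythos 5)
Your proof is correct and follows essentially the same route as the paper: induction on $r$, with the base case being that every 2-bridge knot has proper rational unknotting number one, and the inductive step removing a single rational tangle $R(p_r/q_r)$ with $p_r$ odd by one proper RTR. The only differences are cosmetic --- you supply an explicit Bezout/parity argument for the base case where the paper simply asserts it, you locate an odd $p_i$ via the determinant formula where the paper notes that at most one $p_i$ can be even, and you absorb a twist to make $q_r$ even before replacing with $R(1/0)$ where the paper replaces directly with $R(1/1)$ or $R(1/0)$ according to the parity of $q_r$.
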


\begin{proof}
We prove this by induction. If $r\leq 2$, then $K$ is a 2-bridge knot. Since every 2-bridge knot has proper rational unknotting number one, this suffices for the base case. Now suppose that $r\geq 3$. Since $K$ is a knot, at most one of the $p_i$ can be even. Thus, we can assume that $p_r$ is an odd integer. So depending on whether $q_r$ is odd or even we can perform a proper RTR to replace the tangle $R(p_r/q_r)$ in $K$ with a copy of $R(1/1)$ or $R(1/0)$. That is we can transform $K$ into 
\[K'=\M\left(e;\frac{p_1}{q_1}, \dots , \frac{p_{r-1}}{q_{r-1}}, \frac{1}{0}\right)=\M\left(e;\frac{p_1}{q_1}, \dots , \frac{p_{r-1}}{q_{r-1}}\right)\]
or
\[K'=\M\left(e;\frac{p_1}{q_1}, \dots , \frac{p_{r-1}}{q_{r-1}}, \frac{1}{1}\right)=\M\left(e-1;\frac{p_1}{q_1}, \dots , \frac{p_{r-1}}{q_{r-1}}\right),
\]
by a single proper RTR. This is obviously sufficient for the induction step.
\end{proof}
 \bibliographystyle{alpha}
\bibliography{master}
\end{document}